\setlist[itemize]{noitemsep, topsep=0pt}
\setlist[enumerate]{noitemsep, topsep=0pt}
\setlist[itemize]{leftmargin=*}
\setlist[enumerate]{leftmargin=*}
\providecommand{\U}[1]{\protect\rule{.1in}{.1in}}
\providecommand{\norm}[1]{\left\lVert#1\right\rVert}
\providecommand{\pr}[1]{\left(#1\right)} 
\newcommand{\normi}[3]{\norm{#1}_{
		\ifthenelse{\equal{#2}{1}}{H_0^1\pr{\mathcal{O}_{#3}}}{%
			\ifthenelse{\equal{#2}{-1}}{H^{-1}\pr{\mathcal{O}_{#3}}}{}}}}
\newcommand{\subjclass}[2][2020]{%
	\let\@oldtitle\@title%
	\gdef\@title{\@oldtitle\footnotetext{\textbf{#1 \emph{Mathematics subject classification.}} #2}}%
}
\newcommand{\keywords}[1]{%
	\let\@@oldtitle\@title%
	\gdef\@title{\@@oldtitle\footnotetext{\textbf{\emph{Key words.}} #1.}}%
}
\newtheorem{theorem}{Theorem}
\newtheorem{assumption}[theorem]{Assumption}
\newtheorem{definition}[theorem]{Definition}
\newtheorem{example}[theorem]{Example}
\newtheorem{lemma}[theorem]{Lemma}
\newtheorem{remark}[theorem]{Remark}
\newenvironment{proof}[1][Proof]{\noindent\textbf{#1.} }{\ \rule{0.5em}{0.5em}}
\author[1,*]{Hugo Lhachemi}
\author[2,3,**]{Ionu\c t Munteanu}
\author[4,***]{Christophe Prieur}
\affil[1]{\small Universit\'e Paris-Saclay, CNRS, CentraleSup\`{e}lec, Laboratoire des signaux et syst\`{e}mes, 91190, Gif-sur-Yvette, France} 
\affil[2]{Faculty of Mathematics, Al. I. Cuza University, Bd. Carol I, 11, Iasi 700506, Romania}
\affil[3]{O. Mayer Institute of Mathematics, Romanian Academy, Bd. Carol I, 8, Iasi 700505, Romania}
\affil[4]{Universit\'e Grenoble Alpes, CNRS, Grenoble-INP, GIPSA-lab, F-38000, Grenoble, France}
\affil[*]{e-mail: hugo.lhachemi@centralesupelec.fr}
\affil[**]{e-mail of the corresponding author: ionut.munteanu@uaic.ro}
\affil[***]{e-mail: Christophe.Prieur@gipsa-lab.fr}
\title{Boundary output feedback stabilisation for 2-D and 3-D parabolic equations}
\date{}
\begin{document}
	\maketitle
	\begin{abstract} The present paper addresses the topic of boundary output feedback stabilization of parabolic-type equations, governed by linear differential  operators which can be diagonalized by the introduction of adequate weighting functions (by means of the Sturm-Liouville method), and which evolve in bounded spatial domains that are subsets of $\mathbb{R}^d,\ d=1,2,3$.  
	Combining ideas inspired by \cite{lhachemi2022finite} for the boundary output feedback control of 1-D parabolic PDEs and \cite{munteanu2019boundary} for the state feedback control of multi-D parabolic PDEs, we report in this paper an output feedback boundary stabilizing control with internal Dirichlet measurements designed by means of a finite-dimensional observer. The reported control design procedure is shown to be systematic for 2-D and 3-D parabolic equations.
	\end{abstract}
	\noindent\textbf{Keywords:} second order parabolic equations, exponential asymptotic stabilization,  observer design, eigenvalues and eigenfunctions, spectral decomposition, proportional feedback control
	
	\noindent\textbf{MSC2020:} 35K10, 93D15, 93B53, 93B52. 
	\section{Introduction}
	 There is now a large amount of literature dealing with boundary control of dynamical infinite-dimensional systems. In particular, various techniques have been developed for the design of control strategies for 1-D partial differential equations (PDEs). These include: Lyapunov methods \cite{bastin:coron:book:2016}, backstepping design \cite{krstic2008boundary}, linear quadratic control methods \cite{curtain2020introduction}, characteristic analysis \cite{bressan:book:2000}, among other approaches. Even if some of these approaches have been generalized to multi-D PDEs, such as LQR methods (see e.g., \cite{van1993h8}), constructive methods for multi-D PDEs are not so developed. Among the  contributions, one can find in~\cite{barbu2013boundary} the design of simple proportional-type boundary stabilizing controllers for multi-D parabolic-type equations under a restrictive assumption concerning the linear independence of the traces of the normal derivatives of the eigenfunctions. Extensions of this approach while removing the aforementioned restrictive assumption have been reported in the recent textbook \cite{munteanu2019boundary}. Boundary control and observation of heat equations on multi-dimensional domains has also been solved in~\cite{feng2022boundary} under the restrictive assumption that all the unstable modes are simple.
	
	In this paper, we study the problem of output feedback stabilization of $2$-D and $3$-D parabolic PDEs using spectral reduction methods. Spectral decomposition techniques consist first of the projection of the PDE into a finite-dimensional unstable system plus a residual stable infinite-dimensional system. In this framework, the control strategy is designed on the unstable finite-dimensional part of the plant. Although simple in their basic concepts, spectral-based control design methods are challenging because one must ensure that the control strategy does not introduce any  spillover effect, that is: the control strategy originally designed on a finite-dimensional approximation of the PDE plant may actually fail to stabilize the infinite-dimensional system due to the interconnection of the controller with the infinite-dimensional residual dynamics (see, e.g., \cite{balas1988finite,balas1978feedback}). These type of approaches originate back to the 60s in the works of Rusell \cite{russell1978controllability} and to the 80s in particular in the work \cite{Triggiani:1980aa}. These ideas were  further developed later in various directions: see e.g. \cite{coron-trelat-heat,lhachemi2020boundary,orlov2000discontinuous} for the one-dimensional case and e.g. \cite{barbu2,munteanu2019boundary,triggiani2007stability,xu2008stabilization} for the multi-dimensional case. 
	 
	Spectral reduction-based methods are very attractive in practice because they allow the design of finite-dimensional control strategies for parabolic PDEs; see for instance the seminal works \cite{Triggiani:1980aa,sakawa1983feedback} in the context of output feedback. They are particularly relevant because they allow the computation of reduced order models, making numerical computations and practical implementations much easier to handle compared to infinite-dimensional control and observation strategies \cite{koga2020materials}. Adopting spectral reduction-based representation of parabolic PDEs~\cite{coron-trelat-heat,russell1978controllability} and leveraging on pioneer works \cite{Triggiani:1980aa,sakawa1983feedback} augmented with Linear Matrix Inequalities (LMIs) procedures~\cite{katz2020constructive}, stabilization problems for 1-D parabolic PDEs have been solved in a systematic manner for various boundary control and boundary output \cite{lhachemi2022finite}, including the possibility to handle systems of 1-D parabolic PDEs \cite{grune2022finite}. In this paper, we further develop and generalize these methods to the case of multi-D parabolic PDEs.

		The aim of this paper is to report a constructive control design procedure for the output feedback boundary stabilization of multi-D parabolic PDEs that is systemic in the 2-D and 3-D cases. Extending procedures reported in \cite{lhachemi2022finite} for 1-D PDEs and combining this latter work with analyses on the eigenspaces inspired by \cite{munteanu2019boundary}, we succeed to design a finite-dimensional observer-based output feedback control strategy for multi-D parabolic-type equations. More precisely, we consider in this work the following boundary-controlled parabolic-type equation evolving in $\mathcal{O}$, an open and connected subset of $\mathbb{R}^d$ with $d\in\{1,2,3\}$, with smooth boundary $\partial\mathcal{O}$ split into two disjoint parts $\partial\mathcal{O}=\Gamma_1\cup\Gamma_2,$ such that $\Gamma_1$ has non-zero Lebesgue measure. Then, the system is described by
	\begin{subequations}\label{e1}
	\begin{align}
	&\partial_tz(x,t)+\sum_{i,j=1}^d a_{ij}(x)\partial_{ij}z(x,t)+\sum_{i=1}^d b_i(x)\partial_iz(x,t)+c(x)z(x,t)=0,\ t>0,\ x\in\mathcal{O};\\
	& z(x,t)=u(x,t),\   x\in \Gamma_1,\ z(x,t)=0 \ x\in\Gamma_2,\ t>0;\\
	& z(0,x)=z_o(x),\ x\in\mathcal{O}.
	\end{align}
	\end{subequations}
The system output consists of $M\in\mathbb{N}^*$ in-domain measurements: 
\begin{equation}\label{measurements}
y(t)=\left(z(\xi_1,t),z(\xi_2,t),\ldots,z(\xi_M,t)\right),
\end{equation} 
with $\xi_i\in\mathcal{O}$ that are 2 by 2 distinct. As we shall see, the number of measurements $M$ to be selected will depend on the maximum multiplicity of the unstable eigenvalues of the plant to be stabilized by the control strategy. In this context, our only assumption in the present work is that the second order governing differential operator can be diagonalized in a suitable Riesz-basis (this will be described in details in Subsection \ref{s2} below). The main result of this paper can be informally stated as follows (see Theorem \ref{main:theorem} for a precise statement)
	
	\noindent \textit{Theorem:  Assuming that the governing linear operator of equation \eqref{e1} is in divergence form (as written in \eqref{eq_div_form} below), there exists an explicit output feedback controller (see \eqref{mio27} below with $U$ provided by \eqref{eq_aux_input_U}) which exponentially stabilizes the reaction-diffusion equation \eqref{e1} based on the sole internal measurement \eqref{measurements}.}
	
	The outline of the paper is as follows. Various notations, assumptions, and preliminary properties are summarized in Section~\ref{s2}. Then, the proposed control stategy along with the main stability result, that rigorously formalizes the above informal theorem, is reported in Section \ref{s3}. Concluding remarks are formulated in Section~\ref{s5}.

	\section{Notation and preliminary properties}\label{s2}
	
	\subsection{Notation and basic definitions}
	
	Spaces $\mathbb{R}^n$ are endowed with the Euclidean scalar product $\left<\cdot,\cdot\right>_n$ and norm $\|\cdot\|_n$. The associated induced norms of matrices are  denoted by $\|\cdot\|$. $L^2(\mathcal{O})$ stands for the space of square Lebesgue integrable functions on $\mathcal{O}$ and is endowed with the inner product $\left<f,g\right>=\int_\mathcal{O}f(x)g(x)dx$ with associated norm denoted by $\|\cdot\|_{L^2}.$ In addition, we denote by $\left<\cdot,\cdot\right>_{L^2(\Gamma_1)}$ the  scalar product in $L^2(\Gamma_1)$ with the Lebesgue surface measure. For an integer $m\geq 1$, the $m-$order Sobolev space is denoted by $H^m(\mathcal{O})$ and is endowed with its usual norm denoted by $\|\cdot\|_{H^m}.$ We set $H_0^1(\mathcal{O})$ for the completion of the space of infinitely differentiable functions, which are nonzero only on a compact subset of $\mathcal{O},$ with respect to the Sobolev norm $\|\cdot\|_{H^1}$. For a symmetric matrix $P\in\mathbb{R}^{n\times n},\ P\succeq 0$ (resp. $P\succ0$) means that $P$ is positive semi-definite (resp. positive definite). 
	
	Let $\left\{\varphi_n\right\},\ n\in\mathbb{N}^*$ be a sequence in a Hilbert space $(H,\left<\cdot,\cdot\right>_{H},\|\cdot\|_{H}).$ It is called a Riesz basis if: (i) $\overline{\text{span}\left\{\varphi_n\right\}}=H$; and (ii) there exists constants $0<c\leq C<\infty$ such that
	$$c\sum_{n\geq 1}|\alpha_n|^2\leq \left\|\sum_{n\geq 1}\alpha_n\varphi_n\right\|_{H}^2\leq C\sum_{n\geq 1}|\alpha_n|^2,$$ 
for all sequences of scalars $\left\{\alpha_n\right\}_n$ so that $\sum_{n\geq1}|\alpha_n|^2<\infty.$
	
	For any given function $\mu\in C(\mathcal{O})$, we introduce the weighted  Lebesgue space 
	$$L^2_\mu(\mathcal{O})=\left\{f:\mathcal{O}\rightarrow\mathbb{R} \;\mathrm{measurable}\, : \, \int_\mathcal{O}f^2(x)\mu(x)dx<\infty\right\}.$$ 
	If there exist constants $0<\mu_m,\mu_M<\infty$ such that $0<\mu_m\leq \mu(x)\leq \mu_M$ almost everywhere, then the two spaces $L^2(\mathcal{O})$ and $L_\mu^2(\mathcal{O})$ are both algebraically and topologically equivalent. This implies, in particular, that a Riesz basis in $L^2(\mathcal{O})$ is also a Riesz basis in $L_\mu^2(\mathcal{O})$ and vice versa. 
	
	\subsection{Differential operator in divergence form}

	Let us denote by $\mathcal{A}: H^2(\mathcal{O})\rightarrow L^2(\mathcal{O})$ the second order differential operator:
	\begin{equation}\label{Op_A}
	\mathcal{A}f=\sum_{i,j=1}^d a_{ij}(x)\partial_{ij}f+\sum_{i=1}^d b_i(x)\partial_if+c(x)f .
	\end{equation} 
	
\begin{assumption}\label{assumption1}
		We assume that there exists a  multiplier $\mu\in C^2(\overline{\mathcal{O}})$, with $0<\mu_m\leq \mu(x)\leq \mu_M$ for all $x\in\mathcal{O}$ for some constants $0<\mu_m,\mu_M<\infty$, such that $\mu\mathcal{A}$ can be rewritten in divergence form:
		\begin{equation}\label{eq_div_form}
		\mu\mathcal{A}f=-\sum_{i=1}^d\partial_i(\tilde{a}_i(x) \partial_i f)+\tilde{c}(x)f
		\end{equation}
		with $C^1(\overline{\mathcal{O}})$-smooth coefficients $\tilde{a}_i,\ \tilde{c}$, for which there exists constants $0 < \tilde{a}_m < \tilde{a}_M$ and $\tilde{c}_m < \tilde{c}_M$ such that $0 < \tilde{a}_m \leq \tilde{a}_i(x) \leq \tilde{a}_M$ and $\tilde{c}_m \leq \tilde{c}(x) \leq \tilde{c}_M$ for all $x\in\overline{\mathcal{O}}$ and all $1 \leq i \leq d$.
\end{assumption}

Previous assumption implies that $-\mathcal{A}_0 := -\mathcal{A}|_{H^2(\mathcal{O})\cap H_0^1(\mathcal{O})}$ is the generator of a $C_0$-analytic semigroup in $L^2(\mathcal{O})$. To show this,  we apply the well-known Hille-Yosida theorem. First, $\mathcal{D}(\mathcal{A}_0)=H^2(\mathcal{O})\cap H_0^1(\mathcal{O})$ is dense in $L^2(\mathcal{O})$. Then, for $\lambda>0$, we consider the equation
	$$(\lambda+\mathcal{A}_0)f = g .$$
	It follows, by scalarly multiplying this equation by $\mu f$ that
	$$\begin{aligned}\left<g,\mu f\right>&=\left<(\lambda+\mathcal{A}_0)f,\mu f\right>\\&
	=\lambda\left<\mu f,f\right>+\left<\mu\mathcal{A}_0f,f\right>\\&
	\geq (\lambda\mu_m+\tilde{c}_m) \|f\|_{L^2(\mathcal{O})}^2+\tilde{a}_m\|\nabla f\|_{L^2(\mathcal{O})}^2.\end{aligned}$$ 
	Hence, for $\lambda$ large enough, we have $\lambda\mu_m + \tilde{c}_m>0$ and
	$$(\lambda\mu_m+\tilde{c}_m)\|f\|_{L^2(\mathcal{O})}^2\leq \mu_M\|g\|_{L^2(\mathcal{O})}\|f\|_{L^2(\mathcal{O})}$$
	or, equivalently
	$$\|(\lambda+\mathcal{A}_0)^{-1}g\|_{L^2(\mathcal{O})}=\|f\|_{L^2(\mathcal{O})}\leq \frac{\mu_M}{\lambda\mu_m+\tilde{c}_m}\|g\|_{L^2(\mathcal{O})}.$$ 
	Thus, $-\mathcal{A}_0$ is the generator of a $C_0$-analytic semigroup  $\left\lbrace e^{-t\mathcal{A}_0}:\ t\geq0\right\rbrace$ in $L^2(\mathcal{O}).$ 
	
	Besides this, the above inequalities, together with the compact embedding of $H_0^1(\mathcal{O})$ in $L^2(\mathcal{O})$, implies the compactness of the resolvent of $-\mathcal{A}_0$. Therefore, $-\mathcal{A}_0$ has a countable set of eigenvalues, which accumulates at infinity, and for which the corresponding eigenfunctions form a Riesz basis in $L^2(\mathcal{O}).$ More exactly, in view of the divergence form \eqref{eq_div_form}, we consider the following weighted eigenvalue problem ($\lambda \in \mathbb{C}$):
	$$-\sum_{i=1}^d\partial_i(\tilde{a}_i\partial_i\varphi)+\tilde{c}\varphi=\mu\lambda\varphi,\ x\in\mathcal{O};\ \varphi=0 \text{ on } \partial\mathcal{O}.$$ 
	Owing to classical theory on spectral properties of elliptic self-adjoint operators (see, e.g. \cite[Chapter 8]{gilbarg1977elliptic}) we know that the above problem has a countable set of solutions formed by an increasing sequence of real eigenvalues $\left\{\lambda_n\right\}_{n\in\mathbb{N}^*}$ which accumulate to infinity and with corresponding eigenfunctions $\left\{\varphi_n\right\}_{n\in\mathbb{N}^*}$ that form an orthonormal basis in $L^2_\mu(\mathcal{O}).$ By the above discussions, we know that $\left\{\varphi_n\right\}_{n\in\mathbb{N}^*}$ forms a Riesz basis in $L^2(\mathcal{O})$ as well (but is not necessarily orthonormal). Let us define $\psi_n:=\mu\varphi_n,\ n\geq 1.$ Then $\left\{\varphi_n\right\}_{n\in\mathbb{N}^*}$ and $\left\{\psi_n\right\}_{n\in\mathbb{N}^*}$ are bi-orthonormal in $L^2(\mathcal{O})$, i.e., 
	$$\left<\varphi_i,\psi_j\right>=\delta_{i,j},\ i,j\geq 1,$$
	where $\delta_{i,j}$ is the Kronecker symbol. In particular, there exist constants $c_1,c_2> 0$ such that 
	\begin{equation}\label{eq_riez_ineq}
	c_1 \sum_{n \geq 1} \left< f , \psi_n \right>^2
	\leq 
	\Vert f \Vert_{L^2(\mathcal{O})}^2
	\leq 
	c_2 \sum_{n \geq 1} \left< f , \psi_n \right>^2 ,
	\quad \forall f \in L^2( \mathcal{O} ).
	\end{equation}	

The following lemma, which is the key ingredient for the introduction of the Lyapunov functional in the proof of our main result stated in Theorem~\ref{main:theorem}, provides a direct relation between the $H_0^1$-norm and the coefficients of projection onto the Riesz basis $\{ \phi_n \}_n$.

\begin{lemma}\label{lemma1}
		Let us fix $\nu \geq 0$ so that $\tilde{c}_m + \nu \mu_m > 0$.  Then, there exist constants $c_3,c_4 > 0$ such that
		\begin{equation}\label{eq_estimate_H1_norm}
		c_3 \Vert f \Vert_{H_0^1(\mathcal{O})}^2
		\leq \sum_{n \geq 1} (\lambda_n + \nu) \left< f , \psi_n \right>^2
		\leq 
		c_4 \Vert f \Vert_{H_0^1(\mathcal{O})}^2,
		\end{equation}
		for all $f \in H^2(\mathcal{O}) \cap H_0^1(\mathcal{O})$.
\end{lemma}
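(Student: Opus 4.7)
The plan is to identify the series $\sum_n (\lambda_n+\nu)\langle f,\psi_n\rangle^2$ with the quadratic energy form
$$E_\nu(f) := \sum_{i=1}^d \int_\mathcal{O} \tilde{a}_i (\partial_i f)^2\,dx + \int_\mathcal{O} (\tilde{c}+\nu\mu) f^2\,dx,$$
and then observe that $E_\nu$ is uniformly equivalent to $\|\cdot\|_{H_0^1(\mathcal{O})}^2$ thanks to the pointwise bounds on $\tilde{a}_i$, $\tilde{c}$, and $\mu$.

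First, since $\{\varphi_n\}_n$ is orthonormal in $L^2_\mu(\mathcal{O})$, writing $c_n := \langle f,\varphi_n\rangle_{L^2_\mu} = \int_\mathcal{O} \mu f\varphi_n\,dx = \langle f,\psi_n\rangle$, Parseval in $L^2_\mu$ gives $f=\sum_n c_n \varphi_n$ and $\|f\|_{L^2_\mu}^2 = \sum_n c_n^2$. Next, I would observe that $\mathcal{A}_0$ is self-adjoint in $L^2_\mu(\mathcal{O})$: indeed, for $f,g\in H^2\cap H_0^1$, multiplying $\mathcal{A}_0 f$ by $\mu g$ and integrating by parts with vanishing boundary terms yields $\langle\mathcal{A}_0 f,g\rangle_{L^2_\mu} = \int_\mathcal{O}(\mu\mathcal{A}_0 f)g\,dx = \sum_i\int_\mathcal{O}\tilde{a}_i\partial_i f\,\partial_i g\,dx + \int_\mathcal{O}\tilde{c}\,fg\,dx$, which is symmetric in $f,g$. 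The eigenvalues of $\mathcal{A}_0$ are exactly the $\lambda_n$ with eigenfunctions $\varphi_n$, so for $f\in\mathcal{D}(\mathcal{A}_0) = H^2\cap H_0^1$ the spectral theorem in $L^2_\mu$ gives the convergence $\mathcal{A}_0 f = \sum_n \lambda_n c_n \varphi_n$ and, taking the inner product with $f$,
$$\sum_{n\geq 1} \lambda_n c_n^2 \;=\; \langle \mathcal{A}_0 f, f\rangle_{L^2_\mu} \;=\; \sum_{i=1}^d \int_\mathcal{O} \tilde{a}_i (\partial_i f)^2\,dx + \int_\mathcal{O} \tilde{c}\,f^2\,dx.$$

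Adding $\nu\|f\|_{L^2_\mu}^2 = \nu\sum_n c_n^2 = \int_\mathcal{O}\nu\mu f^2\,dx$ to both sides produces the key identity $\sum_{n\geq 1}(\lambda_n+\nu)\langle f,\psi_n\rangle^2 = E_\nu(f)$. The upper bound then follows from $\tilde{a}_i\leq\tilde{a}_M$ and $\tilde{c}+\nu\mu\leq \tilde{c}_M+\nu\mu_M$ combined with Poincaré's inequality in $H_0^1$, yielding $E_\nu(f)\leq c_4\|f\|_{H_0^1(\mathcal{O})}^2$. For the lower bound, the hypothesis $\tilde{c}_m+\nu\mu_m>0$ gives $\tilde{c}(x)+\nu\mu(x)\geq \tilde{c}_m+\nu\mu_m>0$ pointwise, so $E_\nu(f)\geq \tilde{a}_m\|\nabla f\|_{L^2}^2 + (\tilde{c}_m+\nu\mu_m)\|f\|_{L^2}^2 \geq c_3\|f\|_{H_0^1(\mathcal{O})}^2$. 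I anticipate no serious obstacle; the only step requiring real care is the justification of $\sum_n \lambda_n c_n^2 = \langle\mathcal{A}_0 f,f\rangle_{L^2_\mu}$, which hinges on $f\in\mathcal{D}(\mathcal{A}_0)$ so that the spectral decomposition converges in the graph norm of $\mathcal{A}_0$ and legitimizes the interchange of series with integration.
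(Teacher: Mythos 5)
Your proof is correct and takes essentially the same route as the paper: both hinge on the integration-by-parts identity $\left<(\mathcal{A}_0+\nu)f,f\right>_{L^2_\mu(\mathcal{O})}=\sum_{i=1}^d\int_\mathcal{O}\tilde{a}_i(\partial_i f)^2\,dx+\int_\mathcal{O}(\tilde{c}+\nu\mu)f^2\,dx$ and then compare this quadratic form with $\Vert f\Vert_{H_0^1(\mathcal{O})}^2$ using the bounds on $\tilde{a}_i$, $\tilde{c}$, $\mu$ (plus Poincar\'e). The only minor difference is that you exploit orthonormality of $\{\varphi_n\}$ in $L^2_\mu(\mathcal{O})$ to get an exact Parseval identity for $\sum_n(\lambda_n+\nu)\left<f,\psi_n\right>^2$, whereas the paper passes through $(\mathcal{A}_0+\nu)^{1/2}f$ and the Riesz-basis inequalities \eqref{eq_riez_ineq} in unweighted $L^2$, obtaining the same conclusion with less sharp constants.
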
	
	
\begin{proof}	Recall that, thanks to the Poincar\'e inequality, we have that $\|\nabla \cdot\|_{L^2(\mathcal{O})}$ is an equivalent norm in $H_0^1(\mathcal{O})$. In particular, there exists a constant $\mathcal{C}>0$ such that 
	$$\|\nabla f\|^2_{L^2(\mathcal{O})}\geq \mathcal{C}\|f\|^2_{H_0^1(\mathcal{O})},\ \forall f\in H_0^1(\mathcal{O}).$$
	
	 Next, 
	we observe for any $f \in H^2(\mathcal{O}) \cap H_0^1(\mathcal{O})$ the following:
	\begin{align*}
	\left< (\mathcal{A}_0+\nu) f , f \right>_{L_\mu^2(\mathcal{O})}
	& = \int_\mathcal{O} \mu \left\{ (\mathcal{A}_0 f) f + \nu f^2 \right\} dx \\
	& = - \sum_{i=1}^d \int_\mathcal{O} \partial_i ( \tilde{a}_i \partial_i f ) f dx + \int_\mathcal{O} (\tilde{c}+\nu\mu) f^2 dx \\
	& = \sum_{i=1}^d \int_\mathcal{O} \tilde{a}_i (\partial_i f )^2 dx + \int_\mathcal{O} (\tilde{c}+\nu\mu) f^2 dx \\
\end{align*}
	where we have applied integration by parts. This shows that $$\left< (\mathcal{A}_0+\nu) f , f \right>_{L_\mu^2(\mathcal{O})}\geq (\tilde{c}_m+\nu\mu_m)\|f\|^2_{L^2(\mathcal{O})},$$ and so $\lambda_n > - \nu$ for all $n \geq 1$; and that
	\begin{equation*}
	 \mathcal{C}\tilde{a}_m\|f\|^2_{H_0^1(\mathcal{O})} \leq \Vert (\mathcal{A}_0+\nu)^{1/2} f \Vert_{L_\mu^2(\mathcal{O})}^2\leq \max(\tilde{a}_M,\tilde{c}_M+\nu\mu_M)\|f\|^2_{H_0^1(\mathcal{O})}.
	\end{equation*}
	Then it can be seen from \eqref{eq_riez_ineq} that
	\begin{align*}
	\sum_{n \geq 1} (\lambda_n + \nu) \left< f , \psi_n \right>^2
	& \leq \frac{1}{c_1} \Vert (\mathcal{A}_0+\nu)^{1/2} f \Vert_{L^2(\mathcal{O})}^2 \\
	& \leq \frac{1}{c_1 \mu_m} \Vert (\mathcal{A}_0+\nu)^{1/2} f \Vert_{L_\mu^2(\mathcal{O})}^2 \\
	& \leq \frac{\max(\tilde{a}_M,\tilde{c}_M+\nu\mu_M)}{c_1 \mu_m} \Vert f \Vert_{H_0^1(\mathcal{O})}^2
	\end{align*}
	while
	\begin{align*}
	\sum_{n \geq 1} (\lambda_n + \nu) \left< f , \psi_n \right>^2
	& \geq \frac{1}{c_2} \Vert (\mathcal{A}_0+\nu)^{1/2} f \Vert_{L^2(\mathcal{O})}^2 \\
	& \geq \frac{1}{c_2 \mu_M} \Vert (\mathcal{A}_0+\nu)^{1/2} f \Vert_{L_\mu^2(\mathcal{O})}^2 \\
	& \geq \frac{\mathcal{C}\tilde{a}_m}{c_2 \mu_M} \Vert f \Vert_{H_0^1(\mathcal{O})}^2 .
	\end{align*}
	This concludes the proof of Lemma \ref{lemma1}.
\end{proof}
	
	We fix the integer $N_0\in\mathbb{N}^*$ such that $\lambda_j\geq \lambda_{N_0 + 1} > 0$ for all $j\geq N_0+1$; and let $N\in\mathbb{N}$ be large enough such that $N\geq N_0$ and $\lambda_n \geq 1,\ \forall n\geq N+1$. A key element in the application of the control strategy reported in this paper relies on the asymptotic behavior of the eigenfunctions $\varphi_n$ evaluated at the measurement locations $\xi_i$. This asymptotic behavior is assessed through the following lemma.
	
\begin{lemma}\label{lemma2}
	 The following holds:
		\begin{equation}\label{eq: lemma 2}
		\sum_{n\geq N+1}\frac{\varphi^2_n(\xi_i)}{\lambda_n^2} < \infty , \qquad \forall i\in\{1,2,\ldots,M\} .
		\end{equation}
\end{lemma}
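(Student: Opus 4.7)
The plan is to identify the series in \eqref{eq: lemma 2} with the squared Fourier coefficients, in the bi-orthogonal system $\{\psi_n\}$, of the Green's function with pole at $\xi_i$ of a suitable positive elliptic shift of $\mathcal{A}_0$. The summability will then drop out of the left-hand Riesz-basis inequality in \eqref{eq_riez_ineq}, combined with the classical $L^2$-regularity of such Green's functions in dimension $d\leq 3$.

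Concretely, fix $\nu\geq 0$ with $\tilde{c}_m+\nu\mu_m > 0$, so that by Lemma~\ref{lemma1} every eigenvalue $\lambda_n+\nu$ is positive. Introduce
$$\mathcal{L}f := -\sum_{k=1}^d\partial_k(\tilde{a}_k\partial_k f) + (\tilde{c}+\nu\mu)f \quad\text{on}\quad H^2(\mathcal{O})\cap H_0^1(\mathcal{O}),$$
which by Assumption~\ref{assumption1} equals $\mu(\mathcal{A}_0+\nu)$, and is formally self-adjoint, uniformly elliptic and with $C^1(\overline{\mathcal{O}})$ coefficients. Since $\xi_i\in\mathcal{O}$ is an interior point and $d\in\{1,2,3\}$, classical elliptic theory (e.g.\ the Gr\"uter--Widman pointwise estimates) provides a Green's function $G\in H_0^1(\mathcal{O})\cap C(\overline{\mathcal{O}}\setminus\{\xi_i\})$ solving $\mathcal{L}G=\delta_{\xi_i}$ in $\mathcal{D}'(\mathcal{O})$, whose only singularity at $\xi_i$ behaves like $|x-\xi_i|^{-1}$ for $d=3$, logarithmically for $d=2$, and is bounded for $d=1$; a direct computation in polar or spherical coordinates shows $G\in L^2(\mathcal{O})$ in all three cases. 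Since $\varphi_n\in H^2\cap H_0^1\hookrightarrow C(\overline{\mathcal{O}})$ by the Sobolev embedding (again valid for $d\leq 3$), I may pair $\mathcal{L}G=\delta_{\xi_i}$ with $\varphi_n$, and using the symmetry of $\mathcal{L}$ (both $G$ and $\varphi_n$ vanish on $\partial\mathcal{O}$) together with the identity $\mathcal{L}\varphi_n=(\lambda_n+\nu)\mu\varphi_n=(\lambda_n+\nu)\psi_n$ obtain
$$\varphi_n(\xi_i)=\langle G,\mathcal{L}\varphi_n\rangle = (\lambda_n+\nu)\,\langle G,\psi_n\rangle.$$
Plugged into the left inequality of \eqref{eq_riez_ineq} with $f=G$, this yields
$$c_1\sum_{n\geq 1}\frac{\varphi_n^2(\xi_i)}{(\lambda_n+\nu)^2}\ \leq\ \|G\|_{L^2(\mathcal{O})}^2\ <\ \infty.$$

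To conclude it then suffices to observe that, by the choice of $N$, one has $\lambda_n\geq 1$ for all $n\geq N+1$, hence $\lambda_n+\nu\leq (1+\nu)\lambda_n$ on this range, so
$$\sum_{n\geq N+1}\frac{\varphi_n^2(\xi_i)}{\lambda_n^2}\ \leq\ (1+\nu)^2\sum_{n\geq N+1}\frac{\varphi_n^2(\xi_i)}{(\lambda_n+\nu)^2}\ <\ \infty.$$
The main (and only non-routine) ingredient is the $L^2$-integrability of $G$: it rests on classical pointwise Green's function bounds for second-order elliptic operators in divergence form with $C^1$ coefficients, and this is precisely where the restriction $d\leq 3$ enters, since for $d\geq 4$ the fundamental singularity $|x-\xi_i|^{2-d}$ would cease to be in $L^2_{\mathrm{loc}}$ around $\xi_i$.
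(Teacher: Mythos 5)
Your argument is correct in substance, but it takes a genuinely different route from the paper. The paper does not use Green's functions at all: it views $\mathcal{O}$ as a manifold with the metric $\mu\,dx$, invokes the H\"ormander--Sogge spectral cluster bound $\sum_{\sqrt{\lambda_j}\in[\lambda,\lambda+1)}|\varphi_j(\xi)|^2\leq C\lambda^{d-1}$, valid uniformly in $\xi\in\overline{\mathcal{O}}$, and then sums over the windows $[m^\beta,m^\beta+1)$ with $\frac{1}{5-d}<\beta<1$, so that convergence reduces to $(5-d)\beta>1$, i.e.\ exactly $d\leq 3$. Your route instead identifies $\varphi_n(\xi_i)/(\lambda_n+\nu)$ with the coefficient $\left<G(\xi_i,\cdot),\psi_n\right>$ of the Green's function of $\mu(\mathcal{A}_0+\nu)$ and concludes from the Riesz inequality \eqref{eq_riez_ineq} together with $G(\xi_i,\cdot)\in L^2(\mathcal{O})$; the dimensional restriction then appears as the local square integrability of $|x-\xi_i|^{2-d}$. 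Your version is more elementary (classical elliptic Green's function bounds instead of spectral cluster estimates) and makes the role of $d\leq 3$ very transparent, and it even gives summability of the full series $\sum_{n\geq1}\varphi_n^2(\xi_i)/(\lambda_n+\nu)^2$; the paper's cluster estimate, on the other hand, yields a bound that is uniform over all $\xi\in\overline{\mathcal{O}}$ rather than only at the chosen sensor points, and avoids constructing $G$ altogether. One small fix in your write-up: for $d=2,3$ the Green's function is \emph{not} in $H_0^1(\mathcal{O})$ (its gradient behaves like $|x-\xi_i|^{1-d}$ near the pole, so it lies only in $W_0^{1,p}$ for $p<d/(d-1)$), so the integration by parts cannot be justified by $H^1$ membership; this does not harm the argument, since the identity $\varphi_n(\xi_i)=(\lambda_n+\nu)\left<G(\xi_i,\cdot),\psi_n\right>$ follows directly from the standard representation formula $v(\xi_i)=\int_\mathcal{O}G(\xi_i,x)\bigl(\mu(\mathcal{A}_0+\nu)v\bigr)(x)\,dx$ applied to $v=\varphi_n\in H^2(\mathcal{O})\cap H_0^1(\mathcal{O})$, combined with the symmetry of $G$.
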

	
\begin{proof}
	Considering  $\mathcal{O}$ as a manifold with the Riemannian metric $\mu dx$, it follows that $\mathcal{A}$ is a self-adjoint operator in the Lebesgue $L^2$ space associated to this manifold. Hence, one may argue as in \cite{hormander1994spectral} (or as in \cite{sogge2002eigenfunction} for the particular case of the Laplace operator) to deduce the existence of a constant $C>0$ such that, for any $\lambda \geq 1$, we have
	\begin{equation}\label{e2}
	\sum_{\sqrt{\lambda_j}\in[\lambda,\lambda+1)}|\varphi_j(\xi)|^2\leq C\lambda^{d-1},\ \forall \xi\in \overline{\mathcal{O}} .
	\end{equation} 
	
	 Let $0 < \frac{1}{5-d}<\beta<1$, where we recall that $d\in\{1,2,3\}$. We note that $[1,\infty)=\cup_{m\geq 1}[m^\beta,m^\beta+1).$ Therefore, taking $\lambda=m^\beta$ in \eqref{e2}, we infer for any $\xi\in\overline{\mathcal{O}}$ that
	$$\begin{aligned}\sum_{n\geq N+1}\frac{\varphi^2_n(\xi)}{\lambda_n^2}& \leq \sum_{m=1}^\infty \left(\sum_{j \geq N+1,\, \sqrt{\lambda_j}\in[m^\beta,m^\beta+1)}\frac{\varphi^2_j(\xi)}{\lambda_j^2}\right)\leq\sum_{m=1}^\infty\frac{1}{m^{4\beta}}\left(\sum_{j \geq N+1,\,\sqrt{\lambda_j}\in[m^\beta,m^\beta+1)}\varphi^2_j(\xi)\right)\\&\leq C\sum_{m=1}^\infty\frac{1}{m^{4\beta}}m^{(d-1)\beta}
	= C\sum_{m=1}^\infty\frac{1}{m^{(5-d)\beta}}<\infty,
	\end{aligned}$$
since $\beta(5-d)>1.$
\end{proof}
	
\begin{remark}
It is worth being noted that for dimensions $d \geq 4$ the quantities $\sum_{n\geq N+1}\frac{\varphi^2_n(\xi_i)}{\lambda_n^2},\ i\in\left\{1,2,...,M\right\},$ might not be finite, in general. This is the key point that restricts the application of the proposed control strategy for multi-D equations with $d \geq 4$. 
\end{remark}
	
	For latter purpose, we  need to show a unique continuation property of the eigenfunctions of the operator $\mathcal{A}$, as stated in the following lemma.

\begin{lemma}\label{lemma3}
		Let any $\varphi\not\equiv 0$ satisfying
		$$ -\sum_{i=1}^d\partial_i(\tilde{a}_i(x)\partial_i\varphi)+\tilde{c}(x)\varphi-\mu(x)\lambda\varphi=0 \text{ in }\mathcal{O};\ \varphi=0\text{ on }\partial\mathcal{O}.$$ Then, $\sum_{i=1}^dn_i(\cdot)\tilde{a}_i(\cdot)\partial_i\varphi(\cdot)$ is not identically zero on $\Gamma_1$. Here, $n_i$ are the components of the unit outward normal to the boundary of $\mathcal{O}$.
\end{lemma}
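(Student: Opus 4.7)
The plan is to argue by contradiction and reduce the statement to a standard boundary unique continuation property for second-order uniformly elliptic equations. Suppose for contradiction that $\sum_{i=1}^d n_i \tilde{a}_i \partial_i \varphi \equiv 0$ on $\Gamma_1$. The first step is to observe that, combined with the Dirichlet condition, this forces a full set of vanishing Cauchy data on $\Gamma_1$. Indeed, since $\varphi \equiv 0$ on $\partial\mathcal{O}$, all tangential derivatives of $\varphi$ vanish on $\Gamma_1$, hence on $\Gamma_1$ we have $\nabla \varphi = (\partial_n \varphi)\, n$, where $\partial_n \varphi := \nabla \varphi \cdot n$. The contradiction hypothesis then reduces to $\bigl( \sum_{i=1}^d n_i^2\, \tilde{a}_i \bigr) \partial_n \varphi \equiv 0$ on $\Gamma_1$. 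Since $\sum_i n_i^2 = 1$ and each $\tilde{a}_i(x) \geq \tilde{a}_m > 0$, the bracket is uniformly positive, so $\partial_n \varphi \equiv 0$ on $\Gamma_1$.

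Next I would rewrite the divergence-form equation, using the $C^1$-regularity of the $\tilde{a}_i$, in non-divergence form as a uniformly elliptic second-order PDE with Lipschitz principal coefficients and bounded lower-order coefficients. With vanishing Dirichlet and Neumann traces on the open relatively-open piece $\Gamma_1$ of the smooth boundary, the natural move is to extend $\varphi$ by zero across $\Gamma_1$ into a slightly larger open set $\widetilde{\mathcal{O}} \supset \mathcal{O}$. Thanks to the vanishing Cauchy data, this extension is of class $H^2$ locally and satisfies the same elliptic equation weakly in $\widetilde{\mathcal{O}}$ (once the coefficients are extended across $\Gamma_1$ in a suitable, e.g.\ reflective, manner). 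Since the extension vanishes on a nonempty open set outside $\overline{\mathcal{O}}$, one then invokes the classical Aronszajn--H\"ormander strong unique continuation theorem for second-order uniformly elliptic operators with Lipschitz principal part to propagate the zero set: first to a neighborhood of $\Gamma_1$, and then, by the connectedness of $\mathcal{O}$, throughout all of $\mathcal{O}$. This yields $\varphi \equiv 0$ on $\mathcal{O}$, contradicting the assumption $\varphi \not\equiv 0$.

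The main obstacle lies not in the elementary Cauchy-data reduction (which is essentially a direct computation using the Dirichlet condition and the uniform ellipticity bound $\tilde{a}_i \geq \tilde{a}_m$), but in the rigorous invocation of unique continuation with the available regularity. One must check that the regularity available here, namely $\tilde{a}_i \in C^1(\overline{\mathcal{O}})$, $\tilde{c} \in C^1(\overline{\mathcal{O}})$ and $\mu \in C^2(\overline{\mathcal{O}})$, meets the hypotheses of the Aronszajn/H\"ormander results, which require Lipschitz regularity of the principal part --- so this is comfortably satisfied. A cleaner alternative, avoiding the explicit zero-extension step, is to appeal directly to a Carleman-estimate-based boundary unique continuation result valid for such elliptic operators. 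Citing the classical literature on unique continuation then closes the argument.
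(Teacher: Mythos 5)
Your proposal is correct and follows essentially the same route as the paper: assume the conormal derivative vanishes on $\Gamma_1$, extend $\varphi$ by zero across a piece of $\Gamma_1$ into an enlarged domain, and invoke classical unique continuation for second-order uniformly elliptic operators with Lipschitz principal part (the paper cites Garofalo--Lin where you cite Aronszajn--H\"ormander) to force $\varphi\equiv 0$, contradicting $\varphi\not\equiv 0$. Your explicit reduction to vanishing Cauchy data and your nondegenerate (reflective) extension of the coefficients are, if anything, slightly more careful than the paper's zero-extension of the $\tilde a_i$, which as written loses uniform ellipticity on the added bump.
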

	
\begin{proof}	
	This property holds true because the principal part of the differential operator is uniformly elliptic (which, usually, is called the elliptic continuation principle). Let us assume by contradiction that $\sum_{i=1}^dn_i(\cdot)\tilde{a}_i(\cdot)\partial_i\varphi(\cdot)\equiv 0 \text{ on }\Gamma_1.$ Choose some $x_0\in \Gamma_1$, and choose  coordinates $x=(x',x_d)$ so that $x_0=0$ and for some $r>0$
	$$\mathcal{O}\cap B(0,r)=\left\{x\in B(0,r);\ x_d>g(x')\right\},$$ where $g:\mathbb{R}^{d-1}\rightarrow \mathbb{R}$ is a $C^\infty-$function. We extend the domain near $x_0$ by choosing $\psi\in C_c^\infty(\mathbb{R}^{d-1})$ with $\psi=0$ for $\|x'\|_{d-1}\geq r/2$ and $\psi=1$ for $\|x'\|_{d-1}\leq r/4,$ and by letting 
	$$\mathcal{O}^*=\mathcal{O}\cup\left\{x\in B(0,r);\ x_d>g(x')-\varepsilon\psi(x')\right\}.$$ Here, $\varepsilon>0$ is chosen so small that $\left\{(x',x_d);\ \|x'\|_{d-1}\leq r/2,\ x_d=g(x')-\varepsilon\psi(x')\right\}$ is contained in $B(0,r)$. Then, $\mathcal{O}^*$ is connected open set with smooth boundary.
	
	Define the functions
	$$\varphi^*(x)=\left\{\begin{array}{ll}\varphi(x)& \text{ if }x\in\mathcal{O},\\0 & \text{ if }x\in \mathcal{O}^*\setminus\mathcal{O},\end{array}\right.\ $$
	$$\tilde{a}_i^*(x)=\left\{\begin{array}{ll}\tilde{a}_i(x)& \text{ if }x\in\mathcal{O},\\0 & \text{ if }x\in \mathcal{O}^*\setminus\mathcal{O},\end{array}\right.\ $$
	$$(\tilde{c}-\mu\lambda)^*(x)=\left\{\begin{array}{ll}(\tilde{c}-\mu(x)\lambda)(x)& \text{ if }x\in\mathcal{O},\\0 & \text{ if }x\in \mathcal{O}^*\setminus\mathcal{O}.\end{array}\right.\ $$
	Then, $\varphi^*\in H^2(\mathcal{O}^*)$ is solution to
	$$ -\sum_{i=1}^d\partial_i(\tilde{a}^*_i(x)\partial_i\varphi^*(x))+(\tilde{c}-\mu\lambda)(x)^*\varphi^*(x)=0 \text{ a.e. in }\mathcal{O^*},$$with  $\varphi^*\equiv 0$ in some open ball contained in $\mathcal{O}^*\setminus\mathcal{O}.$ Invoking the result in \cite{garofalo1987unique}, we immediately get that $\varphi\equiv0$ in $\mathcal{O}$, which is in contradiction with the hypothesis. It concludes the proof of Lemma \ref{lemma3}.
\end{proof}

With all these elements in hands, we are now in position to introduce the output feedback control strategy that exponentially stabilizes the reaction-diffusion equation \eqref{e1} based on the sole internal measurement \eqref{measurements}. This is discussed in the next section.

\section{Design of the control strategy}\label{s3}

	Let us denote by $A_0:=\left(\left<-\mathcal{A}\varphi_i,\psi_j\right>\right)_{1\leq i,j\leq N_0}.$ It is seen that $A_0=-\text{diag}(\lambda_1,\lambda_2,\dots,\lambda_{N_0}).$ For dimension $d\geq 2$, it is highly possible to have multiple eigenvalues. Note however that the first eigenvalue is always simple. Therefore, the number of scalar measurements $M$ from the system output \eqref{measurements}, as well as the structure of the control strategy, needs to be slightly adapted in function of the multiplicity of the unstable eigenvalues. To fix the ideas and to keep the presentation as concise as possible, we make the following  choice for the spectrum structure (other possible choices can be treated in a similar manner as below without any supplementary effort):   the second eigenvalue is of multiplicity equal two, while the rest of  the first $N_0$ eigenvalues are simple, i.e., we have
	\begin{equation*}
\lambda_1 < \lambda_2 = \lambda_3 < \lambda_4 < \ldots < \lambda_{N_0} .
\end{equation*}	

This configuration leads us to consider $M = 2$ scalar outputs, i.e., 
\begin{equation}\label{measurements case M=2}
y(t)=(z(\xi_1,t),z(\xi_2,t))^\top .
\end{equation}

We then fix $\eta>0$ so that $\lambda_2+\eta\neq \lambda_j,\ \forall j\in\left\{1,3,\dots,N_0\right\}$.

\begin{remark}
Note that the other configurations, in terms of multiplicity of the different eigenvalues, can be handled in a similar way by setting the number of scalar measurements $M$ as the maximum of multiplicity for the eigenvalues $\lambda_1,\ldots,\lambda_{N_0}$. 
\end{remark}

\subsection{Preliminary control design}
	
	For $\gamma>0$ large enough  and for each $v\in L^2(\Gamma_1)$, there exists a unique solution, $D$, to the equation
	\begin{equation}\label{e5}
	\begin{aligned}& \mathcal{A}D-2\sum_{i=1}^{N_0}\lambda_i\left<D,\psi_i\right>\varphi_i-\eta\left<D,\psi_2\right>\varphi_2+\gamma D=0 \text{ in }\mathcal{O}; \\& D=v \text{ on } \Gamma_1,\ D=0 \text{ on }\Gamma_2.
	\end{aligned}
	\end{equation} 
	Indeed, arguing as in \cite{lasiecka1991differential}, this is a direct consequence of the application of the Lax-Milligram theorem. This allows us to introduce the following: 
	
\begin{definition}\label{def operator D}
Let  $D_\gamma: L^2(\Gamma_1)\rightarrow L^2(\mathcal{O})$ be defined by $D_\gamma v:=D$ where, for any given $v\in L^2(\Gamma_1)$, $D \in L^2(\mathcal{O})$ is the unique solution to \eqref{e5}.
\end{definition}	
		
\begin{lemma}
We have
\begin{equation}\label{e6}
	\left(\begin{array}{c}\left<D_\gamma v,\psi_1\right>\\ \left<D_\gamma v,\psi_2\right>\\ \vdots \\ \left<D_\gamma v,\psi_{N_0}\right>\end{array}\right)=-\Lambda_\gamma\left(\begin{array}{c}\left<v,\sum_{i=1}^dn_i\tilde{a}_i\partial_i\varphi_1\right>_{L^2(\Gamma_1)}\\ \left<v,\sum_{i=1}^dn_i\tilde{a}_i\partial_i\varphi_2\right>_{L^2(\Gamma_1)}\\ \vdots \\ \left<v,\sum_{i=1}^dn_i\tilde{a}_i\partial_i\varphi_{N_0}\right>_{L^2(\Gamma_1)}\end{array}\right),
	\end{equation}
	where 
	\begin{equation}\label{eq_Lambda_gamma}
	\Lambda_\gamma 
	= \mathrm{diag}\left(\frac{1}{\gamma-\lambda_1},\frac{1}{\gamma-\lambda_2-\eta},\frac{1}{\gamma-\lambda_3},\dots,\frac{1}{\gamma-\lambda_{N_0}}\right) ,
	\end{equation}	
	while
	\begin{equation}\label{rq1}
	\left<D_\gamma v,\psi_k\right> =- \frac{1}{\gamma+\lambda_k}\left<v,\sum_{i=1}^dn_i\tilde{a}_i\partial_i\varphi_k \right>_{L^2(\Gamma_1)},\ k\geq N_0+1.
	\end{equation}
\end{lemma}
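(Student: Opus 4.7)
The plan is to compute $\langle D_\gamma v, \psi_k\rangle$ for each $k \geq 1$ by taking the scalar product of equation \eqref{e5} with $\psi_k = \mu \varphi_k$ in $L^2(\mathcal{O})$ and using the bi-orthonormality relations $\langle \varphi_i,\psi_j\rangle = \delta_{i,j}$. The only non-routine ingredient is the evaluation of $\langle \mathcal{A} D, \psi_k\rangle$, which will produce the boundary term $\langle v, \sum_i n_i \tilde{a}_i \partial_i \varphi_k\rangle_{L^2(\Gamma_1)}$ after two integrations by parts, and this is where I would focus the argument.

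First I would compute, using that $\mu \mathcal{A} D = -\sum_i \partial_i(\tilde{a}_i \partial_i D) + \tilde{c} D$ (Assumption \ref{assumption1}),
\begin{equation*}
\langle \mathcal{A} D, \psi_k\rangle = \int_{\mathcal{O}} \mu(\mathcal{A} D) \varphi_k \, dx = \int_{\mathcal{O}} \Bigl[-\sum_{i=1}^d \partial_i(\tilde{a}_i \partial_i D) + \tilde{c} D\Bigr] \varphi_k \, dx .
\end{equation*}
A first integration by parts kills the boundary term (since $\varphi_k \equiv 0$ on $\partial\mathcal{O}$) and gives $\sum_i \int_{\mathcal{O}} \tilde{a}_i \partial_i D \, \partial_i \varphi_k \, dx + \int_\mathcal{O} \tilde{c} D \varphi_k\, dx$. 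A second integration by parts moves the derivative back onto $\varphi_k$, producing a boundary term on $\partial\mathcal{O}$ weighted by $D$; by the Dirichlet data $D = v$ on $\Gamma_1$ and $D = 0$ on $\Gamma_2$, this restricts to an integral over $\Gamma_1$. Using then the eigenvalue equation $-\sum_i \partial_i(\tilde{a}_i \partial_i \varphi_k) + \tilde{c}\varphi_k = \mu \lambda_k \varphi_k$, I obtain the identity
\begin{equation*}
\langle \mathcal{A} D, \psi_k\rangle = \Bigl\langle v,\, \sum_{i=1}^d n_i \tilde{a}_i \partial_i \varphi_k \Bigr\rangle_{L^2(\Gamma_1)} + \lambda_k \langle D, \psi_k\rangle .
\end{equation*}

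Next I would plug this expression into the scalar product of \eqref{e5} against $\psi_k$. Bi-orthonormality collapses the sum $\sum_{i=1}^{N_0} \lambda_i \langle D, \psi_i\rangle \langle \varphi_i, \psi_k\rangle$ to $\lambda_k \langle D,\psi_k\rangle$ when $1 \leq k \leq N_0$ and to $0$ when $k \geq N_0+1$, and similarly the $\eta$-term survives only when $k=2$. For $k \in \{1,3,4,\dots,N_0\}$ this leaves
\begin{equation*}
(\gamma - \lambda_k)\langle D, \psi_k\rangle = -\Bigl\langle v,\, \sum_{i=1}^d n_i \tilde{a}_i \partial_i \varphi_k \Bigr\rangle_{L^2(\Gamma_1)} ,
\end{equation*}
for $k=2$ the same identity but with $\gamma - \lambda_2 - \eta$ on the left, and for $k \geq N_0+1$ the identity with $\gamma + \lambda_k$. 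Choosing $\gamma$ large enough so that all these coefficients are non-zero (which is already implicit in the solvability of \eqref{e5}), I divide and read off precisely \eqref{e6} and \eqref{rq1}.

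The main obstacle is ensuring the integrations by parts are legitimate: solutions to \eqref{e5} obtained through Lax--Milgram lie a priori in $H^1(\mathcal{O})$, not $H^2(\mathcal{O})$. I would handle this either by appealing to elliptic regularity up to the boundary (the coefficients $\tilde{a}_i, \tilde{c}, \mu$ are $C^1(\overline{\mathcal{O}})$-smooth and $\partial\mathcal{O}$ is smooth), or, more cleanly, by interpreting the second integration by parts as the weak formulation of the eigenvalue problem for $\varphi_k \in H^2(\mathcal{O}) \cap H_0^1(\mathcal{O})$ tested against $D \in H^1(\mathcal{O})$ with inhomogeneous trace, in the spirit of \cite{lasiecka1991differential}. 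Once the identity for $\langle \mathcal{A} D, \psi_k\rangle$ is established, everything else is algebra.
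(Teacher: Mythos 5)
Your proposal is correct and follows essentially the same route as the paper: test equation \eqref{e5} against $\psi_k=\mu\varphi_k$, compute $\left<\mu\mathcal{A}D_\gamma v,\varphi_k\right>$ by integration by parts using the Dirichlet data of $D_\gamma v$ and $\varphi_k$ together with the eigenvalue equation, and then solve the resulting scalar relation $(\gamma-\lambda_k-\eta\delta_{2,k})\left<D_\gamma v,\psi_k\right>=-\left<v,\sum_i n_i\tilde{a}_i\partial_i\varphi_k\right>_{L^2(\Gamma_1)}$ for $k\leq N_0$ (and $(\gamma+\lambda_k)$ for $k\geq N_0+1$). Your added remark on justifying the integrations by parts via elliptic regularity or the weak (transposition-type) formulation is a reasonable extra precision that the paper leaves implicit.
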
		
		
\begin{proof}
Scalarly multiplying equation \eqref{e5} by $\psi_k=\mu\varphi_k,\ k=1,2,\ldots,N_0,$ while taking advantage of the bi-orthogonality of the sequences $\left\{\varphi_i\right\}_{i\geq 1}$  and $\left\{\psi_i\right\}_{i\geq 1}$, we get
	\begin{equation}\label{inteq1}
	\begin{aligned}\left<\mu\mathcal{A}D_\gamma v,\varphi_k\right>-2\lambda_k\left<D_\gamma v,\psi_k\right>-\eta\left<D_\gamma v,\psi_2\right>\delta_{2,k}+\gamma\left<D_\gamma v,\psi_k\right>=0.
	\end{aligned}
	\end{equation}
	Let us compute $\left<\mu\mathcal{A}D_\gamma v,\varphi_k\right>$. Using the integration by parts formula and invoking the boundary conditions of $\varphi_j$ and $D_\gamma v$, we obtain that
	$$\begin{aligned}\left<\mu\mathcal{A}D_\gamma v,\varphi_k\right>&=\left<-\sum_{i=1}^d \partial_i(\tilde{a}_i\partial_iD_\gamma v)+\tilde{c}D_\gamma v,\varphi_k\right>\\&
	= \sum_{i=1}^d \left< D_\gamma v,n_i\tilde{a}_i\partial_i\varphi_k\right>_{L^2(\Gamma_1)}+\left<D_\gamma v,-\sum_{i=1}^d \partial_i(\tilde{a}_i\partial_i\varphi_k)+\tilde{c}\varphi_k\right>\\&
	=\left<v,\sum_{i=1}^dn_i\tilde{a}_i\partial_i\varphi_k\right>_{L^2(\Gamma_1)}+\lambda_k\left<D_\gamma v,\psi_k\right>.\end{aligned}$$
	Substituting the right hand side of the latter identity into equation (\ref{inteq1}), we infer that
	$$(-\lambda_k-\eta\delta_{2,k}+\gamma)\left<D_\gamma v,\psi_k\right>=-\left<v,\sum_{i=1}^dn_i\tilde{a}_i\partial_i\varphi_k\right>_{L^2(\Gamma_1)},\; k\in\{1,\ldots,N_0\}.$$
	This gives \eqref{e6}. Proceeding similarly, we infer that \eqref{rq1} holds.
\end{proof}

	We next fix $N_0$ positive constants $0<\gamma_1<\gamma_2<\ldots<\gamma_{N_0}$, selected sufficiently large such that, for each $k\in\{1,2,\ldots,N_0\}$,
\begin{enumerate}
\item equation \eqref{e5} is well-posed for $\gamma = \gamma_k$; 
\item $\gamma_k\pm(\lambda_i+\eta\delta_{2,i})\neq0$ for all $1\leq k\leq N_0$ and $i\in\mathbb{N}^*$
\end{enumerate}	
	Following Definition~\ref{def operator D}, we denote for $k\in\{1,2,\ldots,N_0\}$ by $D_{\gamma_k}$ the corresponding operators as defined by \eqref{e5}. 
	
	We now introduce the Gram  matrix $\mathbf{B}$, defined by: 
	\begin{equation}\label{ie5}
	\textbf{B}:=\left( \left<\sum_{i=1}^dn_i\tilde{a}_i\partial_i\varphi_k,\sum_{i=1}^dn_i\tilde{a}_i\partial_i\varphi_l\right>_{L^2(\Gamma_1)}\right)_{1\leq k,l\leq N_0} 
	\end{equation} 
	and we set
	\begin{equation}\label{bo}
	B_k:=\Lambda_{\gamma_k} \textbf{B}\Lambda_{\gamma_k},\; k\in\{1,2,\ldots,N_0\}.
	\end{equation}
	We also define 
	\begin{equation}\label{eq_L(x)}
	\mathcal{L}(x)=\left(\sum_{i=1}^dn_i(x)\tilde{a}_i(x)\partial_i\varphi_1(x),\ldots,\sum_{i=1}^dn_i(x)\tilde{a}_i(x)\partial_i\varphi_{N_0}(x)\right)^\top
	\end{equation}	
	for all $x\in\Gamma_1$.	Invoking iteratively the unique continuation property of the eigenfunctions of the operator $\mathcal{A}$, stated by Lemma~\ref{lemma3}, we observe that $\mathcal{L}$ has non-zero entries for all $x$ in a non-zero measure subset of $\Gamma_1$.  Then, because $-\lambda_k-\eta\delta_{2,k}$ for $k\in\{1,2,\dots,N_0\}$ are 2 by 2 distinct and owing to \cite[Proposition 2.1]{munteanu2019boundary}, we have that $B_1+B_2+\ldots+B_{N_0}$ is an invertible matrix. Therefore, we define:
	\begin{equation}\label{ie9}
	A:=(B_1+B_2+\ldots+B_{N_0})^{-1}.
	\end{equation} 
	Next, introducing an auxiliary command input $U:[0,\infty)\rightarrow \mathbb{R}^{N_0},$ that will be specified later in Subsection~\ref{subsec: observer and U}, we define
	\begin{equation}\label{ie10}
	u_k(x,t):= \left<\Lambda_{\gamma_k} AU(t), \mathcal{L}(x)\right>_{N_0} ,\; x\in\Gamma_1 ,\; t\geq0, 
	\end{equation}
	for all $k=1,2,\ldots,N_0$.  Then, we define the boundary control $u$ appearing in the plant \eqref{e1} as 
	\begin{equation}\label{mio27}
	\begin{aligned} u(x,t): &= u_1(x,t)+u_2(x,t)+\ldots+u_{N_0}(x,t) \\&
	= \sum_{k=1}^{N_0}\left<\Lambda_{\gamma_k} AU(t), \mathcal{L}(x)\right>_{N_0}.
	\end{aligned}
	\end{equation}   
	
\begin{lemma}\label{lemma4} It holds, for all $k=1,\ldots,N_0$,
	\begin{equation}\label{e27}\left(\begin{array}{c}\left<D_{\gamma_k}u_k,\psi_1\right>\smallskip\\ \left<D_{\gamma_k}u_k,\psi_2\right>\\\vdots\\\left<D_{\gamma_k}u_k,\psi_{N_0}\right>\end{array}\right)=-B_kAU,\end{equation}
\end{lemma}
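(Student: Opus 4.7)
The plan is to view this lemma as a direct algebraic consequence of the explicit formula \eqref{e6} combined with the definition of the Gram matrix $\mathbf{B}$. There is no analytic obstacle here; the entire content is bookkeeping around the bi-orthogonal expansion, so the main care required is tracking which $\Lambda_{\gamma_k}$ sits on which side.

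First I would fix $k \in \{1,\dots,N_0\}$ and apply formula \eqref{e6} with $v = u_k(\cdot,t) \in L^2(\Gamma_1)$ and $\gamma = \gamma_k$. This immediately yields
\begin{equation*}
\bigl(\langle D_{\gamma_k} u_k, \psi_j\rangle\bigr)_{1\le j\le N_0}
= -\Lambda_{\gamma_k}\bigl(\langle u_k, \mathcal{L}_j\rangle_{L^2(\Gamma_1)}\bigr)_{1\le j\le N_0},
\end{equation*}
where $\mathcal{L}_j(x) := \sum_{i=1}^d n_i(x)\tilde{a}_i(x)\partial_i\varphi_j(x)$ denotes the $j$-th entry of the vector $\mathcal{L}(x)$ introduced in \eqref{eq_L(x)}. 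So everything reduces to computing the $N_0$ scalar quantities $\langle u_k, \mathcal{L}_j\rangle_{L^2(\Gamma_1)}$.

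Next I would substitute the definition \eqref{ie10} of $u_k$. Writing $w := \Lambda_{\gamma_k} A U(t) \in \mathbb{R}^{N_0}$, we have $u_k(x,t) = \sum_{l=1}^{N_0} w_l\, \mathcal{L}_l(x)$, and therefore by linearity of the $L^2(\Gamma_1)$ inner product,
\begin{equation*}
\langle u_k,\mathcal{L}_j\rangle_{L^2(\Gamma_1)}
= \sum_{l=1}^{N_0} w_l \,\langle \mathcal{L}_l, \mathcal{L}_j\rangle_{L^2(\Gamma_1)}
= (\mathbf{B} w)_j ,
\end{equation*}
where in the last equality I use the very definition \eqref{ie5} of the Gram matrix $\mathbf{B}$ (taking care that $\mathbf{B}$ is symmetric, so row/column indexing is consistent).

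Combining the two displays gives
\begin{equation*}
\bigl(\langle D_{\gamma_k} u_k, \psi_j\rangle\bigr)_{1\le j\le N_0}
= -\Lambda_{\gamma_k}\mathbf{B} w
= -\Lambda_{\gamma_k}\mathbf{B}\Lambda_{\gamma_k} A U(t)
= -B_k A U(t),
\end{equation*}
by the definition \eqref{bo} of $B_k$, which is precisely \eqref{e27}. The only non-trivial ingredient used is \eqref{e6}, which has already been established, so there is essentially no obstacle beyond keeping the indices straight.
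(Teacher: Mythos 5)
Your proposal is correct and follows essentially the same route as the paper's proof: apply \eqref{e6} with $v=u_k$, $\gamma=\gamma_k$, substitute the definition \eqref{ie10} of $u_k$ to recognize the Gram matrix $\mathbf{B}$ from \eqref{ie5}, and conclude via the definition \eqref{bo} of $B_k$. The only difference is that you spell out the intermediate computation of $\langle u_k,\mathcal{L}_j\rangle_{L^2(\Gamma_1)}$ explicitly, which the paper leaves as a one-line "taking into account \eqref{ie5} and \eqref{ie10}".
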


\begin{proof}

Based on \eqref{e6}, we infer that
	$$\displaystyle\left( \begin{array}{c}\left<D_{\gamma_k}u_k,\psi_1\right>\\ \left<D_{\gamma_k}u_k,\psi_2\right>\\ \vdots \\ \left<D_{\gamma_k}u_k,\psi_{N_0}\right>\end{array}\right) 
	= - \Lambda_{\gamma_k}
	\left(\begin{array}{c}\left<u_k, \displaystyle\sum_{i=1}^dn_i\tilde{a}_i\partial_i\varphi_1\right>_{L^2(\Gamma_1)}\\ \left<u_k,\displaystyle\sum_{i=1}^dn_i\tilde{a}_i\partial_i\varphi_2\right>_{L^2(\Gamma_1)}\\ \vdots \\ \left<u_k,\displaystyle\sum_{i=1}^dn_i\tilde{a}_i\partial_i\varphi_{N_0}\right>_{L^2(\Gamma_1)}\end{array} \right).$$
	Taking into account \eqref{ie5} and \eqref{ie10}, this yields
	$$\left( \begin{array}{c}\left<D_{\gamma_k}u_k,\psi_1\right>\\ \left<D_{\gamma_k}u_k,\psi_2\right>\\ \vdots \\ \left<D_{\gamma_k}u_k,\psi_{N_0}\right>\end{array}\right) =-\Lambda_{\gamma_k} \mathbf{B}\Lambda_{\gamma_k}  AU$$
	and so, owing to the definition of $B_k$ given by \eqref{bo}, the claimed identity \eqref{e27} is proved.
\end{proof}

\subsection{Spectral reduction}
	
	Our objective is now to specify the auxiliary command input $U$ that appears in \eqref{ie10}. To do so, we first need to carry on a spectral reduction of the system formed by the plant \eqref{e1} along with the preliminary control input \eqref{mio27}. This is done in this subsection.
	
\begin{lemma}
Consider the following change of variable :
	\begin{equation}\label{eq_change_var}
	w:=z-\sum_{k=1}^{N_0}D_{\gamma_k}u_k 
	\end{equation} 
and define the coefficients of projection $w_n(t) = \left< w(t,\cdot) , \psi_n \right>$ and $z_n(t) = \left< z(t,\cdot) , \psi_n \right>$. Then we have
\begin{align}
	\frac{d}{dt}z_n & = - \lambda_n z_n + \sum_{k=1}^{N_0} ( \lambda_n + \gamma_k ) \left< D_{\gamma_k} u_k , \psi_n \right> \nonumber \\
	& \phantom{=}\; - 2 \sum_{k,i=1}^{N_0} \lambda_i \left< D_{\gamma_k} u_k , \psi_i \right> \delta_{i,n} - \eta \sum_{k=1}^{N_0} \left< D_{\gamma_k} u_k , \psi_2 \right> \delta_{2,n} \label{eq_dyn_zn}
\end{align}	
for all $n \geq 1$. Moreover, we have
	\begin{align}
	\frac{d}{dt}w_n & = - \lambda_n w_n + \sum_{k=1}^{N_0} \gamma_k \left< D_{\gamma_k} u_k , \psi_n \right> - \sum_{k=1}^{N_0} \left< D_{\gamma_k} \frac{d}{dt}u_k , \psi_n \right> \label{wop}
	\end{align}
for all $n \geq N_0+1$.
\end{lemma}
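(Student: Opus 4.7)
The plan is a direct computation that exploits the fact that the change of variable \eqref{eq_change_var} was engineered so that $w$ carries homogeneous Dirichlet data on $\partial\mathcal{O}$: on $\Gamma_1$ one has $z = u = \sum_k u_k$ and $D_{\gamma_k}u_k = u_k$, while on $\Gamma_2$ every term vanishes. Hence $w(t,\cdot) \in H^2(\mathcal{O})\cap H_0^1(\mathcal{O})$ under the standing regularity on the closed-loop solution, which is precisely what is needed to kill the boundary terms in the integration by parts against $\psi_n$.

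First I would evaluate $\langle \mathcal{A}w, \psi_n\rangle$. Writing $\langle \mathcal{A}w, \psi_n\rangle = \langle \mu\mathcal{A}w, \varphi_n\rangle$ and using the divergence form \eqref{eq_div_form}, two successive integrations by parts eliminate both boundary contributions ($\varphi_n = 0$ on $\partial\mathcal{O}$ for the first, $w = 0$ for the second), leaving $\langle w, \mu\mathcal{A}\varphi_n\rangle = \lambda_n \langle w, \psi_n\rangle = \lambda_n w_n$. In parallel, rearranging the defining equation \eqref{e5} for $D_{\gamma_k}u_k$ yields the pointwise identity
\begin{equation*}
\mathcal{A}D_{\gamma_k}u_k = 2\sum_{i=1}^{N_0}\lambda_i \langle D_{\gamma_k}u_k,\psi_i\rangle \varphi_i + \eta \langle D_{\gamma_k}u_k,\psi_2\rangle \varphi_2 - \gamma_k D_{\gamma_k}u_k,
\end{equation*}
which, tested against $\psi_n$ and using the bi-orthogonality $\langle \varphi_i,\psi_n\rangle = \delta_{i,n}$, gives
\begin{equation*}
\langle \mathcal{A}D_{\gamma_k}u_k,\psi_n\rangle = 2\sum_{i=1}^{N_0}\lambda_i \langle D_{\gamma_k}u_k,\psi_i\rangle \delta_{i,n} + \eta \langle D_{\gamma_k}u_k,\psi_2\rangle \delta_{2,n} - \gamma_k \langle D_{\gamma_k}u_k,\psi_n\rangle.
\end{equation*}

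With these two pieces in hand, to obtain \eqref{eq_dyn_zn} I differentiate $z_n = \langle z,\psi_n\rangle$ in time, invoke \eqref{e1} to write $\frac{d}{dt}z_n = -\langle \mathcal{A}z, \psi_n\rangle$, decompose $z = w + \sum_k D_{\gamma_k}u_k$ by linearity, substitute the two projections just computed, and express $w_n$ through $z_n$ in order to group the $-\lambda_n$ and $-\gamma_k$ contributions into the coefficient $\lambda_n + \gamma_k$; the identity \eqref{eq_dyn_zn} then falls out directly. For \eqref{wop}, I differentiate $w_n = z_n - \sum_k \langle D_{\gamma_k}u_k,\psi_n\rangle$ in time, which introduces the extra term $-\sum_k \langle D_{\gamma_k}\frac{d}{dt}u_k,\psi_n\rangle$, and substitute \eqref{eq_dyn_zn}; rewriting $-\lambda_n z_n = -\lambda_n w_n - \lambda_n \sum_k \langle D_{\gamma_k}u_k,\psi_n\rangle$ collapses the coefficient $\lambda_n + \gamma_k$ to $\gamma_k$. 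Restricting to $n \geq N_0+1$ makes $\delta_{i,n} = 0$ for $i \in \{1,\ldots,N_0\}$ and $\delta_{2,n} = 0$, so the two Kronecker-delta sums drop out, leaving exactly \eqref{wop}.

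The calculation is essentially bookkeeping; the one conceptually substantive step is the observation that the lifting $\sum_k D_{\gamma_k}u_k$ absorbs the inhomogeneous boundary data so that integration by parts against $\psi_n$ reproduces the eigenvalue $\lambda_n$ cleanly. The only fragility is regularity: one needs enough smoothness on $u$ (in particular differentiability in time) for $D_{\gamma_k}\frac{d}{dt}u_k$ to make sense, but this is routine under the standing assumptions on closed-loop trajectories.
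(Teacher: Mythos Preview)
Your proposal is correct and follows essentially the same approach as the paper. The only cosmetic difference is the order: the paper first derives the PDE satisfied by $w$ (writing $\partial_t w = -\mathcal{A}_0 w + \ldots$ after invoking \eqref{e5}), projects onto $\psi_n$ to obtain $\dot w_n$, and then converts to $\dot z_n$ via \eqref{eq_wn_zn}; you instead project $\dot z_n = -\langle\mathcal{A}z,\psi_n\rangle$ first, split $z = w + \sum_k D_{\gamma_k}u_k$, and convert to $\dot w_n$ afterwards. The algebraic content (homogeneous boundary data on $w$ so that $\langle\mathcal{A}w,\psi_n\rangle = \lambda_n w_n$, plus the identity for $\mathcal{A}D_{\gamma_k}u_k$ coming from \eqref{e5}) is identical.
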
	
	
\begin{proof}
	We first equivalently rewrite \eqref{e1} as an internal-type control problem. More precisely, invoking the change of variable \eqref{eq_change_var}, we have
	\begin{align}
	\frac{d}{dt}w & =\frac{d}{dt}z-\frac{d}{dt}\sum_{k=1}^{N_0}D_{\gamma_k} u_k \nonumber \\
	& \overset{\eqref{e1}}{=}-\mathcal{A}z-\frac{d}{dt}\sum_{k=1}^{N_0}D_{\gamma_k}u_k \nonumber \\
	& =-\mathcal{A}_0w-\sum_{k=1}^{N_0}\mathcal{A}D_{\gamma_k}u_k-\frac{d}{dt}\sum_{k=1}^{N_0}D_{\gamma_k}u_k \nonumber \\
	& \overset{\eqref{e5}}{=}-\mathcal{A}_0w-2\sum_{k,i=1}^{N_0}\lambda_i\left<D_{\gamma_k}u_k,\psi_i\right>\varphi_i-\eta\sum_{k=1}^{N_0}\left<D_{\gamma_k}u_k,\psi_2\right>\varphi_2 \nonumber \\ 
	& \phantom{=}\; +\sum_{k=1}^{N_0}\gamma_kD_{\gamma_k}u_k-\frac{d}{dt}\sum_{k=1}^{N_0}D_{\gamma_k}u_k ,\ t>0 . \label{eq_dyn_w}
	\end{align}
Then, recalling that $w_n(t) = \left< w(t,\cdot) , \psi_n \right>$ and $z_n(t) = \left< z(t,\cdot) , \psi_n \right>$, the projection of \eqref{eq_dyn_w} gives 
	\begin{align}
	\frac{d}{dt}w_n & = - \lambda_n w_n - \sum_{k=1}^{N_0} \left< D_{\gamma_k} \frac{d}{dt}u_k , \psi_n \right> - 2 \sum_{k,i=1}^{N_0} \lambda_i \left< D_{\gamma_k} u_k , \psi_i \right> \delta_{i,n} \nonumber \\
	& \phantom{=}\; - \eta \sum_{k=1}^{N_0} \left< D_{\gamma_k} u_k , \psi_2 \right> \delta_{2,n} + \sum_{k=1}^{N_0} \gamma_k \left< D_{\gamma_k} u_k , \psi_n \right> \label{eq_dyn_wn}
	\end{align}
for all $n \geq 1$. This gives \eqref{wop} because $\delta_{i,n} = 0$ for all $1 \leq i \leq N_0$ and $n \geq N_0 + 1$. Now, in view of the the change of variable formula \eqref{eq_change_var}, we have
	\begin{equation}\label{eq_wn_zn}
	w_n = z_n - \sum_{k=1}^{N_0} \left< D_{\gamma_k}u_k , \psi_n \right> .
	\end{equation}	
Thus, combining \eqref{eq_dyn_wn}-\eqref{eq_wn_zn}, we infer that \eqref{eq_dyn_zn} holds.
\end{proof}
	
Our objective is now to write an ODE describing the dynamics of a finite number of modes of the system composed of the plant \eqref{e1} and the control input \eqref{mio27}. To do so, we define 
\begin{equation*}
Z^{N_0}:=(\left<z,\psi_1\right>,\left<z,\psi_{2}\right>,\ldots,\left<z,\psi_{N_0}\right>)^\top
\end{equation*}
and
\begin{equation*}
\Xi:=\mathrm{diag}(0,\eta,0,\dots,0) .
\end{equation*}
In view of equation \eqref{eq_dyn_zn} and taking into account the relation \eqref{e27}, we deduce that
	\begin{align}
	\frac{d}{dt}Z^{N_0}(t) & = A_0 Z^{N_0}(t) + \sum_{k=1}^{N_0}\left[-A_0+\gamma_k I\right]\left( \begin{array}{c}\left<D_{\gamma_k}u_k,\psi_1\right>\\ \left<D_{\gamma_k}u_k,\psi_2\right>\\ \ldots\\ \left<D_{\gamma_k}u_k,\psi_{N_0}\right>\end{array}\right) \nonumber \\
	& \phantom{=}\; +\sum_{k=1}^{N_0}2A_0\left( \begin{array}{c}\left<D_{\gamma_k}u_k,\psi_1\right>\\ \left<D_{\gamma_k}u_k,\psi_2\right>\\ \ldots\\ \left<D_{\gamma_k}u_k,\psi_{N_0}\right>\end{array}\right)
	-\sum_{k=1}^{N_0}\Xi\left( \begin{array}{c}\left<D_{\gamma_k}u_k,\psi_1\right>\\ \left<D_{\gamma_k}u_k,\psi_2\right>\\ \ldots\\ \left<D_{\gamma_k}u_k,\psi_{N_0}\right>\end{array}\right) \nonumber \\
	& = A_0 Z^{N_0}(t) -\sum_{k=1}^{N_0}\left[A_0+\gamma_k I-\Xi\right]B_kAU(t)  \nonumber \\
	& \overset{\eqref{ie9}}{=} A_0 Z^{N_0}(t) + \left( - A_0 - \sum_{k=1}^{N_0} \gamma_k B_k A + \Xi \right) U(t) . \label{op2}
	\end{align} 
	where we recall that $A_0=-\text{diag}(\lambda_1,\ldots,\lambda_{N_0})$. Introducing now a second integer $N \geq N_0 + 1$ to be specified later, we define 
	\begin{equation*}
	Z^{N-N_0}:=(\left<z,\psi_{N_0+1}\right>,\left<z,\psi_{N_0+2}\right>,\ldots,\left<z,\psi_{N}\right>)^\top
	\end{equation*}
	along with $A_1=-\text{diag}(\lambda_{N_0+1},\ldots,\lambda_N)$ and $H^{N-N_0} : \mathbb{R}^{N_0} \rightarrow \mathbb{R}^{N-N_0}$ defined by 
	\begin{align*}
	H^{N-N_0}U & = \left(\sum_{k=1}^{N_0}( \lambda_{N_0 + 1} + \gamma_k ) \left<D_{\gamma_k}u_k ,\psi_{N_0+1}\right>,\ldots,\sum_{k=1}^{N_0} ( \lambda_N + \gamma_k ) \left<D_{\gamma_k}u_k ,\psi_{N}\right>\right)^\top \\
	& \overset{\eqref{rq1}}{=} 
	\sum_{k=1}^{N_0}\left(\begin{array}{c}\left<u_k,\sum_{i=1}^dn_i\tilde{a}_i\partial_i\varphi_{N_0+1}\right>_{L^2(\Gamma_1)}\\
	\left<u_k,\sum_{i=1}^dn_i\tilde{a}_i\partial_i\varphi_{N_0+2}\right>_{L^2(\Gamma_1)}\\ \ldots \\
	\left<u_k,\sum_{i=1}^dn_i\tilde{a}_i\partial_i\varphi_{N}\right>_{L^2(\Gamma_1)}\end{array}\right)
	\end{align*}
where $u_k$ is expressed in function of the auxiliary control input $U$ based on \eqref{ie10}. Then, using again \eqref{eq_dyn_zn}, we deduce that
	\begin{equation}\label{oop2}
	\frac{d}{dt} Z^{N-N_0}(t) = A_1 Z^{N-N_0}(t) + H^{N-N_0}U(t) ,\; t>0.	
	\end{equation} 

\subsection{Observer design and definition of the auxiliary control input $U$}\label{subsec: observer and U}
		
	We are now in position to properly define in this subsection the auxiliary command input $U$ that appears in \eqref{ie10}. 
We select the measurement locations $\xi_1,\xi_2\in\mathcal{O}$ from \eqref{measurements case M=2} such that 
	\begin{equation}\label{output_cond}
	\vert \varphi_i(\xi_1) \vert + \vert \varphi_i(\xi_2) \vert \neq 0 , \; \forall i\in\{1,4,5,\ldots,N_0\} ;
	\qquad 
	\det\left|\begin{array}{cc}\varphi_2(\xi_1)&\varphi_3(\xi_1)\\
	\varphi_2(\xi_2)&\varphi_3(\xi_2)\end{array}\right|\neq0 .
	\end{equation} 
Note that such a selection is always possible due to the fact that $\varphi_i$, $i\in\{1,2,\ldots,N_0\}$, cannot vanish on any open ball included in $\mathcal{O}$ and the fact that $\varphi_2,\varphi_3$ correspond to the same eigenvalue and are linearly independent. 
	
	Second, introducing $\tilde{y}(t)=(w(\xi_1,t),w(\xi_2,t))^\top$, we deduce from the change of variable formula \eqref{eq_change_var} that
	\begin{equation}\label{output}
	y(t) 
	= \tilde{y}(t) + \sum_{k=1}^{N_0} \begin{pmatrix}  D_{\gamma_k} u_k(\xi_1,t) \\ D_{\gamma_k }u_k (\xi_2,t) \end{pmatrix}
	= \sum_{i \geq 1} \begin{pmatrix} \varphi_i(\xi_1) \\ \varphi_i(\xi_2) \end{pmatrix} w_i(t)
	+ \sum_{k=1}^{N_0} \begin{pmatrix}  D_{\gamma_k} u_k(\xi_1,t) \\ D_{\gamma_k }u_k (\xi_2,t) \end{pmatrix} .
	\end{equation}
	Defining 
	$C_0 = \left( \begin{array}{c}\varphi_1(\xi_1) \ \varphi_2(\xi_1)\ \ldots \ \varphi_{N_0}(\xi_1)\\
	\varphi_1(\xi_2) \ \varphi_2(\xi_2)\ \ldots \ \varphi_{N_0}(\xi_2)\end{array}\right)$ 
	and 
	$C_1=\left(\begin{array}{c}\varphi_{N_0 + 1}(\xi_1) \ \varphi_{N_0+2}(\xi_1) \ldots \ \varphi_{N}(\xi_1)\\
	\varphi_{N_0 + 1}(\xi_2) \ \varphi_{N_0+2}(\xi_2) \ldots \ \varphi_{N}(\xi_2)\end{array}\right)$,
	it immediately follows from \eqref{output_cond} that the pair $(A_0,C_0)$ satisfies the Kalman condition. Hence, we can fix $L\in M_{N_0\times 2}(\mathbb{R})$ such that $A_0-LC_0$ is Hurwitz with arbitrary spectral abscissa.
	
	In view of \eqref{eq_wn_zn}, \eqref{op2}, and \eqref{oop2}, we now define the following observer dynamics:
	\begin{subequations}\label{op30}
		\begin{align}
		\hat{w}_n & = \hat{z}_n - \sum_{k=1}^{N_0} \left< D_{\gamma_k}u_k , \psi_n \right> ,\quad 1 \leq n \leq N \\
		\frac{d}{dt}\hat{Z}^{N_0}(t) & = A_0 \hat{Z}^{N_0}(t) + \left( - A_0 - \sum_{k=1}^{N_0} \gamma_k B_k A + \Xi \right) U(t) \\ 
		& \phantom{=}\; - L \left\{ \sum_{i=1}^N \begin{pmatrix} \varphi_i(\xi_1) \\ \varphi_i(\xi_2) \end{pmatrix} \hat{w}_i(t) + \sum_{k=1}^{N_0} \begin{pmatrix} \left( D_{\gamma_k u_k} \right)(\xi_1,t) \\ \left( D_{\gamma_k u_k} \right)(\xi_2,t) \end{pmatrix} - y(t) \right\} \nonumber \\
		\frac{d}{dt}\hat{Z}^{N-N_0}(t) & = A_1\hat{Z}^{N-N_0}(t)+H^{N-N_0}U(t) 
		\end{align}
	\end{subequations}
	for $t > 0$. This allows us to complete the definition of the control strategy by setting the auxiliary control input $U$ as:
	\begin{equation}\label{eq_aux_input_U}
	U=\hat{Z}^{N_0} .
	\end{equation}
	Overall, the control strategy is composed of \eqref{mio27}, \eqref{op30}, and \eqref{eq_aux_input_U}.

\subsection{Main stabilization result}	
	
We can now state the main result of this work.

\begin{theorem}\label{main:theorem}
Assume that Assumption~\ref{assumption1} holds. Let $\delta>0$ and $N_0\geq 1$ be such that $\lambda_n>\delta$ for all $n\geq N_0+1$. Assume that the first $N_0$ eigenvalues of the operator $\mathcal{A}$ are simple except of the second and the third one which are equal.  With corresponding measurement \eqref{measurements case M=2}, pick $\xi_1,\xi_2\in\mathcal{O}$ so that \eqref{output_cond} holds true.  Let $L\in\mathbb{R}^{N_0} $ be such that $A_0-LC_0$ is Hurwitz with eigenvalues that have a real part strictly less than $-\delta$. Then, $\gamma_1<\gamma_2<\ldots<\gamma_{N_0}$ can be selected large enough such that the matrix $-\sum_{k=1}^{N_0}\gamma_k B_kA+\Xi$ is Hurwitz with eigenvalues that have a real part strictly less than $-\delta$. Furthermore, for $N\geq N_0+1$ selected to be large enough,
there exists a constant $C>0$ such that, for any initial condition $z_o\in H^2(\mathcal{O})$,  the  trajectory of the closed-loop system composed of the plant \eqref{e1}, the internal measurement \eqref{measurements} and the controller \eqref{mio27} with $U$ given by \eqref{eq_aux_input_U}, satisfies
\begin{equation}\label{eq: main stability estimate}
\Vert z(\cdot,t) \Vert_{H^1(\mathcal{O})} + \sum_{k=1}^N \vert \hat{z}_k(t) \vert 
\leq C e^{-\delta t} \left( \Vert z(0,t) \Vert_{H^1(\mathcal{O})} + \sum_{k=1}^N \vert \hat{z}_k(0) \vert \right).
\end{equation}
\end{theorem}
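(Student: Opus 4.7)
My plan is a Lyapunov argument around the finite-dimensional reduced observer-controller, combined with a tail estimate invoking Lemma~\ref{lemma1} (which rewrites the $H^1$ tail-norm as a weighted $\ell^2$ series) and Lemma~\ref{lemma2} (which absorbs the residual observation error). Setting $e^{N_0}=\hat{Z}^{N_0}-Z^{N_0}$, $e^{N-N_0}=\hat{Z}^{N-N_0}-Z^{N-N_0}$, and $\zeta(t)=\sum_{i>N}\bigl(\varphi_i(\xi_1),\varphi_i(\xi_2)\bigr)^\top w_i(t)$, I would combine \eqref{op2}, \eqref{oop2}, \eqref{op30} and the definition $U=\hat{Z}^{N_0}$ to show that $X=(\hat{Z}^{N_0},e^{N_0},e^{N-N_0})^\top$ obeys $\dot X = FX + G\zeta$ with
$$F = \begin{pmatrix} -\sum_{k=1}^{N_0}\gamma_k B_k A + \Xi & -LC_0 & -LC_1 \\ 0 & A_0-LC_0 & -LC_1 \\ 0 & 0 & A_1 \end{pmatrix}, \qquad G = \begin{pmatrix} L \\ L \\ 0 \end{pmatrix}.$$

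The block-triangular structure of $F$ reduces the spectral requirement to each diagonal block: $A_0-LC_0$ is Hurwitz with abscissa below $-\delta$ by the choice of $L$; $A_1=-\mathrm{diag}(\lambda_{N_0+1},\ldots,\lambda_N)$ satisfies the same bound because $\lambda_n>\delta$ for $n>N_0$; and $-\sum_k\gamma_k B_kA + \Xi$ can be made Hurwitz with the required abscissa for the $\gamma_k$ sufficiently large via an adaptation of the argument of \cite[Proposition~2.1]{munteanu2019boundary}. Thus a symmetric $P\succ 0$ with $F^\top P+PF\preceq-2\delta P$ exists, and I would introduce the Lyapunov functional
$$V = X^\top P X + \alpha \sum_{n>N}(\lambda_n+\nu)w_n^2,$$
which, by Lemma~\ref{lemma1}, is coercive with respect to $|X|^2+\|w_{\mathrm{tail}}\|_{H_0^1(\mathcal{O})}^2$ for any $\alpha>0$ and $\nu\geq 0$ as in Lemma~\ref{lemma1}.

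Along the closed-loop, the $X$-dynamics contribute $-2\delta X^\top PX + 2X^\top PG\zeta$, while the tail equation \eqref{wop} yields the dissipation $-2\alpha\sum_{n>N}\lambda_n(\lambda_n+\nu)w_n^2$ together with cross terms linear in the coefficients $\langle D_{\gamma_k}u_k,\psi_n\rangle$ (which carry $U$) and $\langle D_{\gamma_k}\dot u_k,\psi_n\rangle$ (which carry $\dot U$); substituting $\dot U=\dot{\hat Z}^{N_0}$ from the observer \eqref{op30} eliminates the time derivative and turns every tail forcing into an affine function of $X$ and $\zeta$. The absorption step then relies on two estimates: first, the Cauchy--Schwarz bound $|\zeta|^2 \leq \bigl(\sum_{i>N}(\varphi_i^2(\xi_1)+\varphi_i^2(\xi_2))/\lambda_i^2\bigr)\sum_{i>N}\lambda_i^2 w_i^2$, whose prefactor tends to zero as $N\to\infty$ thanks to Lemma~\ref{lemma2}; second, the tail-forcing bounds obtained via \eqref{rq1} together with the Riesz inequality $\sum_n\langle D_{\gamma_k}u_k,\psi_n\rangle^2\lesssim\|D_{\gamma_k}u_k\|_{L^2(\mathcal{O})}^2\lesssim|U|^2$ and its analogue for $\dot u_k$. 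Applying Young's inequality to each cross term with a small enough coefficient, tuning $\alpha$, and taking $N$ large enough, all residuals are absorbed into $-2\delta X^\top PX$ and the strict tail dissipation $-2\alpha\sum_{n>N}\lambda_n(\lambda_n+\nu)w_n^2$, yielding $\dot V\leq -2\delta V$. Grönwall's lemma then gives $V(t)\leq V(0)e^{-2\delta t}$, and inverting the change of variable \eqref{eq_change_var} with elliptic regularity for \eqref{e5} to bound $\|D_{\gamma_k} u_k\|_{H^1(\mathcal{O})}$ by $|U|$ transfers the decay to $\|z(\cdot,t)\|_{H^1(\mathcal{O})}+\sum_{k=1}^N|\hat z_k(t)|$, delivering \eqref{eq: main stability estimate}.

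The principal difficulty I anticipate is the simultaneous handling of the $U$- and $\dot U$-linear forcings in the tail equation: reducing $\dot U$ to an affine function of $X$ and $\zeta$ via the observer is immediate, but the resulting cross terms depend on trace-type quantities whose decay in $N$ must be carefully extracted through Lemma~\ref{lemma2}, and it is precisely this step where the restriction $d\leq 3$ becomes indispensable, as highlighted by the remark following Lemma~\ref{lemma2}.
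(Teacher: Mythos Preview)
Your overall strategy coincides with the paper's: block-triangular closed-loop matrix, Lyapunov functional $X^\top PX+\sum_{n>N}(\lambda_n+\nu)w_n^2$ invoking Lemma~\ref{lemma1}, residual $\zeta$ controlled by Lemma~\ref{lemma2}, and Young's inequality to absorb the $U$- and $\dot U$-driven tail forcings. There is, however, one genuine technical gap. You take the error block to be $e^{N-N_0}=\hat Z^{N-N_0}-Z^{N-N_0}$ itself, so that the off-diagonal coupling in $F$ is $-LC_1$ with $C_1=(\varphi_n(\xi_j))_{j\leq 2,\;N_0<n\leq N}$. But the pointwise eigenfunction bound \eqref{e2} only gives $\sum_{\sqrt{\lambda_n}\in[\lambda,\lambda+1)}\varphi_n(\xi)^2=O(\lambda^{d-1})$, so $\|C_1\|$ generally grows with $N$; consequently $F$, and with it any $P$ solving $F^\top P+PF\preceq -2\delta P$, need not remain bounded as $N\to\infty$. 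Your absorption step ``taking $N$ large enough, all residuals are absorbed'' then fails: the cross term $2X^\top PG\zeta$ and the $\dot U$-forcing (which contains $LC_1 e^{N-N_0}$) carry $N$-dependent constants that can outpace the decay $S_{\varphi,N}\to 0$ from Lemma~\ref{lemma2}.

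The fix, which the paper implements, is to work with the rescaled error $\tilde E^{N-N_0}=\Lambda^{N-N_0}e^{N-N_0}$ where $\Lambda^{N-N_0}=\mathrm{diag}(\lambda_{N_0+1},\ldots,\lambda_N)$. The corresponding off-diagonal block becomes $L\tilde C_1$ with $\tilde C_1=C_1(\Lambda^{N-N_0})^{-1}$, and now Lemma~\ref{lemma2} gives $\|\tilde C_1\|^2\leq\sum_{n>N_0}(\varphi_n(\xi_1)^2+\varphi_n(\xi_2)^2)/\lambda_n^2=O(1)$. With $F$ uniformly bounded, the Lyapunov solution $P$ of $F^\top P+PF+2\delta P=-I$ stays $O(1)$ (as in \cite[Appendix]{lhachemi2022finite}), and only then does the large-$N$ absorption argument go through. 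Apart from this missing rescaling, your proposal matches the paper's proof.
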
	
	
\begin{example}
	Consider the plant \eqref{e1} with $d=2$, $\mathcal{O}=(0,\pi)\times(0,\pi)$, $\Gamma_1=\left\{x_1\in(0,\pi),\ x_2=0\right\}$, and $\mathcal{A}f=-\Delta f-3\nabla\cdot f-10f$. Hence, introducing $\mu=e^{3x_1+3x_2}$, which is positive and bounded, $1\leq \mu(x)\leq e^{6\pi},$ Assumption~\ref{assumption1} is fulfilled because:
	$$\mu\mathcal{A}f=-\sum_{i=1}^2\partial_i(e^{3x_1+3x_2}\partial_i f)-10e^{3x_1+3x_2}f.$$ 
	The associated eigenfunctions are described by $\varphi_{i,j}(x)=\frac{2}{\pi}e^\frac{-3x_1-3x_2}{2}\sin(ix_1)\sin(jx_2)$ with the corresponding eigenvalues $\lambda_{i,j}=\frac{4i^2+4j^2-31}{4}$. Then, it is seen that the bi-orthogonal system is given by $\psi_{i,j}=\frac{2}{\pi}e^\frac{3x_1+3x_2}{2}\sin(ix_1)\sin(jx_2)$, $i,j\in\mathbb{N}\setminus\left\{0\right\}$. 
	
	In this setting, the open-loop system is unstable with a total of three modes that are not exponentially stable: $-\lambda_{1,1}=23/4$ and $-\lambda_{1,2}=-\lambda_{2,1}=11/4$ with the corresponding three eigenfunctions 
	$$\left\{\frac{2}{\pi}e^\frac{-3x_1-3x_2}{2}\sin x_1\sin x_2, \ \frac{2}{\pi}e^\frac{-3x_1-3x_2}{2}\sin 2x_1\sin x_2,\ \frac{2}{\pi}e^\frac{-3x_1-3x_2}{2}\sin x_1\sin 2x_2\right\}.$$
Hence, fixing $N_0 = 3$, the choice for the spectral multiplicity, of the first three eigenvalues, is verified. Moreover, \eqref{output_cond} holds true as soon as $\xi_i=(\xi_{i1},\ \xi_{i2})\in (0,\pi)^2,\ i=1,2,$ such that $\cos\xi_{11}\cos\xi_{22}-\cos\xi_{12}\cos\xi_{21}\neq0$. This allows the application of Theorem~1.
\end{example}

\begin{proof} 
We first need to rewrite the dynamics of the closed-loop system formed by \eqref{mio27}, \eqref{op30}, and \eqref{eq_aux_input_U} in a suitable format for the upcoming stability analysis. To do so, we first define the errors of observation as
	$$E^{N_0}:=Z^{N_0}-\hat{Z}^{N_0}, \quad \tilde{E}^{N-N_0}:=\Lambda^{N-N_0}(Z^{N-N_0}-\hat{Z}^{N-N_0})$$
	where $\Lambda^{N-N_0}=\text{diag}(\lambda_{Q_0+1},\ldots,\lambda_{N})$ while
	$$\zeta := \sum_{n\geq N+1} \begin{pmatrix} \varphi_n(\xi_1) \\ \varphi_n(\xi_2) \end{pmatrix} w_n, \quad \tilde{C}_1 := C_1 \left(\Lambda^{N-N_0}\right)^{-1} .$$
	Hence, we obtain from \eqref{mio27}, \eqref{op30}, and \eqref{eq_aux_input_U} that
	\begin{subequations}\label{e8}
		\begin{align}
		\frac{d}{dt}\hat{Z}^{N_0} & = \left( -\sum_{k=1}^{N_0}\gamma_k B_k A + \Xi \right) \hat{Z}^{N_0} + L C_0 E^{N_0} + L \tilde{C}_1 \tilde{E}^{N-N_0} + L \zeta, \\
		\frac{d}{dt}E^{N_0} & = (A_0-LC_0)E^{N_0}-L\tilde{C}_1\tilde{E}^{N-N_0}-L\zeta , \\
		\frac{d}{dt}\hat{Z}^{N-N_0} & = A_1\hat{Z}^{N-N_0}+H^{N-N_0}\hat{Z}^{N_0}, \\
		\frac{d}{dt}\tilde{E}^{N-N_0} & = A_1\tilde{E}^{N-N_0}.
		\end{align}
	\end{subequations}
	Introducing the finite-dimensional state vector $X=(\hat{Z}^{N_0},E^{N_0},\tilde{E}^{N-N_0})^\top$ along with the matrices
	\begin{equation}\label{eq_matrix_F}
	F:= 
	\begin{pmatrix}
	-\sum_{k=1}^{N_0}\gamma_kB_kA+\Xi & L C_0 & L \tilde{C}_1 \\
	0 & A_0 - L C_0 & -L \tilde{C}_1 \\
	0 & 0 & A_1
	\end{pmatrix} , \quad
	\mathcal{L} := 
	\begin{pmatrix}
	L \\ -L \\ 0 
	\end{pmatrix} ,
	\end{equation}	
	we infer that the closed-loop system dynamics is described by
	\begin{subequations}\label{eq_CL_traj}
		\begin{align}
		\frac{d}{dt}X & = FX+\mathcal{L}\zeta \\
		\frac{d}{dt}\hat{Z}^{N-N_0} & = A_1\hat{Z}^{N-N_0}+H^{N-N_0}\hat{Z}^{N_0} \\
		\frac{d}{dt}w_n & = - \lambda_n w_n + \sum_{k=1}^{N_0} \gamma_k \left< D_{\gamma_k} u_k , \psi_n \right> - \sum_{k=1}^{N_0} \left< D_{\gamma_k} \frac{d}{dt}u_k , \psi_n \right> , \quad n \geq N+1
		\end{align}
	\end{subequations}

Let $c > 1$ be an arbitrarily given constant. Let us now show that we can fix the real numbers $0<\gamma_1<\gamma_2<\ldots<\gamma_{N_0}< c \gamma_1$ large enough such that the matrix  $-\sum_{k=1}^{N_0}\gamma_kB_kA+\Xi$ is Hurwitz with eigenvalues that have a real part strictly less than $-\delta < 0$. To do so, let $\lambda\in\mathbb{C}$ and a non-zero vector $Z\in\mathbb{C}^{N_0}$ be such that $\left(-\sum_{k=1}^{N_0}\gamma_k B_k A + \Xi \right) Z=\lambda Z$. Recalling that $A$ is defined by \eqref{ie9}, $A$ is symmetric definite positive and $\sum_{k=1}^{N_0} B_k A = I$. Hence, it follows that 
	\begin{align*}
	\lambda\|A^\frac{1}{2}Z\|^2_{N_0} 
	& =\left<\lambda Z,AZ\right>_{N_0}=-\sum_{k=1}^{N_0}\gamma_k\left<B_kAZ,AZ\right>_{N_0}+\left<\Xi Z,AZ\right>_{N_0} \\
	& = -\gamma_1\left<Z,AZ\right>_{N_0}+\gamma_1\sum_{k=1}^{N_0}\left<B_kAZ,AZ\right>_{N_0}-\sum_{k=1}^{N_0}\gamma_k\left<B_kAZ,AZ\right>_{N_0} \\
	& \phantom{=}\; +\left<A^\frac{1}{2}\Xi A^{-\frac{1}{2}}A^\frac{1}{2}Z,A^\frac{1}{2}Z\right>_{N_0}\\&=-\gamma_1\|A^\frac{1}{2}Z\|^2_{N_0}+\sum_{k=2}^{N_0}(\gamma_1-\gamma_k)\left<B_kAZ,AZ\right>_{N_0}+\left<A^\frac{1}{2}\Xi A^{-\frac{1}{2}}A^\frac{1}{2}Z,A^\frac{1}{2}Z\right>_{N_0} .
	\end{align*}
	We have for all $k \in\{1,\ldots,N_0\}$ that $\gamma_1 \leq \gamma_k$ and $B_k$ is positive semi-definite (in view of its definition \eqref{bo}).  Moreover, by the definition of $A$, using the Landau notation, we have
	\begin{equation*}
	\|A^\frac{1}{2}\| = O(\gamma_1)
	, \quad
	\|A^{-\frac{1}{2}}\| = O\left(\frac{1}{\gamma_1}\right)
	\end{equation*}
	as $\gamma_1 \rightarrow + \infty$. Hence $\|A^\frac{1}{2}\Xi A^{-\frac{1}{2}}\|\leq C\eta,$ for some constant $C>0$ independent of $0<\gamma_1<\gamma_2<\ldots<\gamma_{N_0} < c \gamma_1$. It then yields from the above that
	$$\Re(\lambda)\|A^\frac{1}{2}Z\|^2_{N_0}\leq (-\gamma_1+C\eta)\|A^\frac{1}{2}Z\|_{N_0}^2.$$ 
	Since $\|A^\frac{1}{2}Z\|_{N_0} \neq 0$, we obtain for $\gamma_1$ large enough that $\Re(\lambda)<-\delta$, which proves our claim. This, in particular, implies that the matric $F$ defined by \eqref{eq_matrix_F} is Hurwitz with eigenvalues that have a real part strictly less than $-\delta$.

	We now carry on a Lyapunov stability analysis. In view of \eqref{eq_estimate_H1_norm}, let us introduce the Lyapunov function, for all $(X,w)\in \mathbb{R}^{N+N_0}\times H^1(\mathcal{O})$
	\begin{equation}\label{eq_Lyap_fun} 
	V(X,w) = X^\top P X + \sum_{n \geq N+1} (\lambda_n+\nu) w_n^2 .
	\end{equation}
	The computation of the time derivative of $V$ along the system trajectories \eqref{eq_CL_traj} gives
	\begin{align*}
	\dot{V} 
	& = 2 X^\top P ( F X + \mathcal{L} \zeta ) \\
	& \phantom{=}\, + 2 \sum_{n \geq N+1} (\lambda_n+\nu) \left\{ - \lambda_n w_n + \sum_{k=1}^{N_0} \gamma_k \left< D_{\gamma_k} u_k , \psi_n \right> - \sum_{k=1}^{N_0} \left< D_{\gamma_k} \frac{d}{dt}u_k , \psi_n \right> \right\} w_n \\
	& = \tilde{X}^\top \begin{pmatrix} F^\top P + P F & P \mathcal{L} \\ \mathcal{L}^\top P & 0 \end{pmatrix} \tilde{X} - 2 \sum_{n \geq N+1} \lambda_n (\lambda_n+\nu) w_n^2 \\
	& \phantom{=}\; + 2 \sum_{n \geq N+1} (\lambda_n+\nu) \sum_{k=1}^{N_0} \gamma_k \left< D_{\gamma_k} u_k , \psi_n \right> w_n
	- 2 \sum_{n \geq N+1} (\lambda_n+\nu) \sum_{k=1}^{N_0} \left< D_{\gamma_k} \frac{d}{dt}u_k , \psi_n \right> w_n
	\end{align*}
	where $\tilde{X} : = \mathrm{col}(X,\zeta)$. Let us estimate the before last term. To do so, let us note that
	\begin{align*}
	\left< D_{\gamma_k} u_k , \psi_n \right>
	& \overset{\eqref{ie10}}{=} - \left< D_{\gamma_k} \left<\Lambda_{\gamma_k} AU, L\right>_{N_0} , \psi_n \right> \\
	& = - \sum_{l=1}^{N_0} \tilde{\gamma}_{k,l} A_{L_l} U \left< D_{\gamma_k} L_l , \psi_n \right>
	\end{align*}
	where $\tilde{\gamma}_{k,l}$ in the $l$th term on the diagonal of the matrix $\Lambda_{\gamma_k}$ defined by \eqref{eq_Lambda_gamma}, $A_{L_l}$ is the $l$th line of the matrix $A$, and $L_l(x)$ is the $l$th component of $L(x)$ defined by \eqref{eq_L(x)}. Therefore, using Young's inequality, we deduce for any $\epsilon > 0$ that 	
	\begin{align*}
	& 2 \sum_{n \geq N+1} (\lambda_n+\nu) \sum_{k=1}^{N_0} \gamma_k \left< D_{\gamma_k} u_k , \psi_n \right> w_n \\
	& = - 2 \sum_{n \geq N+1} (\lambda_n+\nu) \sum_{k,l=1}^{N_0} \gamma_k \tilde{\gamma}_{k,l} A_{L_l} U \left< D_{\gamma_k} L_l , \psi_n \right> w_n \\
	& \leq 2 \sum_{k,l=1}^{N_0} \sum_{n \geq N+1} \vert \gamma_k \vert \vert \tilde{\gamma}_{k,l} \vert \Vert A_{L_l} \Vert \vert \left< D_{\gamma_k} L_l , \psi_n \right> \vert \Vert U \Vert \times (\lambda_n+\nu) \vert w_n \vert \\
	& \leq \epsilon \sum_{k,l=1}^{N_0} \gamma_k^2 \tilde{\gamma}_{k,l}^2 \Vert A_{L_l} \Vert^2 \left\{ \sum_{n \geq N+1} \left< D_{\gamma_k} L_l , \psi_n \right>^2 \right\} \Vert U \Vert^2
	+ \frac{1}{\epsilon} \sum_{k,l=1}^{N_0} \sum_{n \geq N+1} (\lambda_n+\nu)^2 w_n^2 \\
	& \overset{\eqref{eq_riez_ineq}}{\leq} \epsilon \underbrace{c_1 \sum_{k,l=1}^{N_0} \gamma_k^2 \tilde{\gamma}_{k,l}^2 \Vert A_{L_l} \Vert^2 \Vert \mathcal{R}_N D_{\gamma_k} L_l \Vert^2}_{:=S_{1,N}} \Vert U \Vert^2
	+ \frac{N_0^2}{\epsilon} \sum_{n \geq N+1} (\lambda_n+\nu)^2 w_n^2
	\end{align*}
	where $\mathcal{R}_N f := \sum_{n \geq N+1} \left< f , \psi_n \right> \varphi_n$ and $U = \hat{Z}^{N_0} = E_1 X$ with $E_1 = \begin{bmatrix} I & 0 & 0 \end{bmatrix}$. Similarly, we have
	\begin{align*}
	& 2 \sum_{n \geq N+1} (\lambda_n+\nu) \sum_{k=1}^{N_0} \left< D_{\gamma_k} \frac{d}{dt} u_k , \psi_n \right> w_n \\
	& \overset{\eqref{eq_riez_ineq}}{\leq} \epsilon \underbrace{c_1 \sum_{k,l=1}^{N_0} \tilde{\gamma}_{k,l}^2 \Vert A_{L_l} \Vert^2 \Vert \mathcal{R}_N D_{\gamma_k} L_l \Vert^2}_{:=S_{2,N}} \left\Vert \frac{d}{dt} U \right\Vert^2
	+ \frac{N_0^2}{\epsilon} \sum_{n \geq N+1} (\lambda_n+\nu)^2 w_n^2 
	\end{align*}
	where $\frac{d}{dt}U = \frac{d}{dt}\hat{Z}^{N_0} = E_2 \tilde{X}$ with $E_2 = \begin{bmatrix} -\sum_{k=1}^{N_0}\gamma_kB_kA+\Xi & L C_0 & L \tilde{C}_1 & L \end{bmatrix}$. Finally, we have by Cauchy-Schwarz inequality that
	\begin{align*}
	\Vert \zeta \Vert^2
	& = \left( \sum_{n \geq N+1} \varphi_n(\xi_1) w_n \right)^2 + \left( \sum_{n \geq N+1} \varphi_n(\xi_2) w_n \right)^2 \\
	& \leq \underbrace{\sum_{n \geq N+1} \frac{\varphi_n(\xi_1)^2 + \varphi_n(\xi_2)^2}{(\lambda_n+\nu)^2}}_{:= S_{\varphi,N}} \times \sum_{n \geq N+1} (\lambda_n+\nu)^2 w_n^2 .
	\end{align*}
	Gathering all the the above estimates, we deduce that
	\begin{equation}\label{eq: dV/dt}
	\dot{V} + 2 \delta V \leq \tilde{X}^\top \Theta_1 \tilde{X} + \sum_{n \geq N+1} (\lambda_n+\nu) \Psi_n w_n^2 
	\end{equation}
	for $\delta >0$ fixed such that $F + \delta I$ is Hurwitz and where
	\begin{align*}
	\Theta_1 & = \begin{pmatrix} F^\top P + P F + 2 \delta P + \epsilon S_{1,N} E_1^\top E_1 & P \mathcal{L} \\\mathcal{L}^\top P & -\eta I \end{pmatrix} + \epsilon S_{2,N} E_2^\top E_2 , \\
	\Psi_n & = \left[ - 2 \left( 1 - \frac{N_0^2}{\epsilon} \right) + \eta S_{\varphi,N} \right] \lambda_n + \left[ \frac{2 N_0^2}{\epsilon} + \eta S_{\varphi,N} \right] \nu + 2 \delta , \quad n \geq N+1 
	\end{align*}
	for an arbitrary $\eta > 0$. 
	
	Assume for the moment that $\Theta_1 \preceq 0$ and $\Psi_n \leq 0$ for all $n \geq N+1$. Then, in view of \eqref{eq: dV/dt}, we get that $\dot{V}+2\delta V \leq 0$. Combining this estimate with the definition \eqref{eq_Lyap_fun} of the Lyapunov function $V$, the direct integration of the dynamics $\hat{Z}^{N-N_0}$ from \eqref{eq_CL_traj}, the use of the estimates \eqref{eq_estimate_H1_norm}, and invoking the change of variable formula \eqref{eq_change_var}, we directly infer the existence of a constant $C > 0$, independent of the initial condition, such that the claimed stability estimate \eqref{eq: main stability estimate} holds.
	
	To conclude the proof, it thus remains to show that $N$ can be selected so that $\Theta_1 \preceq 0$ and $\Psi_n \leq 0$ for all $n \geq N+1$. To do so, let us set $\epsilon = 2 N_0^2$ and $\eta = 1/\sqrt{S_{\varphi,N}}$ if  $S_{\varphi,N} \neq 0$, $\eta = N$ otherwise. This implies that $\eta \rightarrow + \infty$ while $\eta S_{\varphi,N} \rightarrow 0$ as $N \rightarrow + \infty$. Hence, for $N$ large enough we have $\eta S_{\varphi,N} \leq 1/2$ and $\lambda_n \geq \lambda_{N+1} > 0$ for all $n \geq N+1$, which implies that
	\begin{align*}
	\Psi_n & \leq \Theta_2 := - \frac{1}{2} \lambda_{N+1} + \frac{3}{2} \nu + 2 \delta , \quad n \geq N+1
	\end{align*}
	with $\Theta_2 \rightarrow - \infty$ as $N \rightarrow + \infty$. Now, since $F$ defined by \eqref{eq_matrix_F} is Hurwitz, we define $P \succ 0$ as the unique solution to the Lyapunov equation $F^\top P + P F + 2 \delta P = -I$. Owing to Lemma~\ref{lemma2}, it can be seen that $\Vert \tilde{C}_1 \Vert = O(1)$, hence $\Vert L \tilde{C}_1 \Vert = O(1)$, as $N \rightarrow + \infty$. Therefore, a result similar to \cite[Lemma in Appendix]{lhachemi2022finite} shows that $\Vert P \Vert = O(1)$ as $N \rightarrow + \infty$. Therefore, we have
	\begin{equation}
	\Theta_1 = \underbrace{\begin{pmatrix} - I + \epsilon S_{1,N} E_1^\top E_1 & P \mathcal{L} \\\mathcal{L}^\top P & -\eta I \end{pmatrix}}_{:= \Theta_{1,p}} + \epsilon S_{2,N} E_2^\top E_2 .
	\end{equation}
	Using the Schur complement for $N$ sufficiently large so that $\eta > 1/2$, we see that $\Theta_{1,p} \preceq -\frac{1}{2} I$ if and only if  $-\frac{1}{2}I + \epsilon S_{1,N} E_1^\top E_1 + \frac{1}{\eta-\frac{1}{2}} P \mathcal{L} \mathcal{L}^\top P \preceq 0$. Noting that $\Vert P \Vert = O(1)$ and $S_{2,N} \rightarrow 0$ as $N \rightarrow + \infty$ while $\Vert E_1 \Vert$ and $\Vert \mathcal{L} \Vert$ are constants independent of $N$, we deduce that $\Theta_{1,p} \preceq -\frac{1}{2} I$ for all $N$ selected to be large enough. In that case, $\Theta_1 \preceq -\frac{1}{2} I + \epsilon S_{2,N} E_2^\top E_2$ for all $N$ selected to be large enough. Since $\Vert E_2 \Vert = O(1)$ and $S_{2,N} \rightarrow 0$ as $N \rightarrow + \infty$, we deduce that $\Theta_1 \preceq 0$ for $N$ large enough. 
\end{proof}

\section{Conclusions}\label{s5} 

This paper discussed the design of an observer-based feedback stabilizing controller for multi-dimensional parabolic type equations governed by diagonalizable second order differential operators. To fix the ideas and to ease the presentation, we focused the developments on the case of three unstable eigenvalues: one of multiplicity one and one of multiplicity two. However, the approach reported in this paper easily extends to any other case with a finite number of unstable modes with arbitrary finite multiplicity. 

To conclude, it is worth to mention that, based on the technique presented in this work, a natural perspective is the study of non-linear multi-dimensional parabolic equations combining the present design method with \cite{lhachemi2021global}, and the study of multi-dimensional  parabolic equations with delays as done in e.g., \cite{lhachemi2020boundary,lhachemi2022boundary}.

\bibliographystyle{plain}
\bibliography{cp}

\end{document}